\newtheorem{lemma}{Lemma}[section]
\newtheorem{theorem}[lemma]{Theorem}
\newtheorem{proposition}[lemma]{Proposition}
\newtheorem{corollary}[lemma]{Corollary}
\newtheorem{dnt}[lemma]{Definition}
\newtheorem{exm}[lemma]{Example}
\newtheorem{rem}[lemma]{Remark}
\newenvironment{prop}{\begin{proposition}}{\end{proposition}}
\newenvironment{cor}{\begin{corollary}}{\end{corollary}}
\begin{document}

\title{Domination in Functigraphs}

\author{
Linda Eroh$^1$ \hspace*{1in} Ralucca Gera$^2$ \hspace*{1in} Cong X. Kang$^3$ \\
Craig Larson$^4$  \hspace*{1in} Eunjeong Yi$^5$\\
$^1$\small Department of Mathematics, University of Wisconsin Oshkosh \\
\small Oshkosh, WI 54901; {\small\em eroh@uwosh.edu}\\
$^2$\small Department of Applied Mathematics, Naval Postgraduate School,\\
 \small Monterey, CA $93943$;  {\small\em rgera@nps.edu}\\
 $^{3,5}$\small Department of General Academics, Texas A\&M University at Galveston\\
 \small Galveston, TX $77553$; {\small\em kangc@tamug.edu$^3$, yie@tamug.edu$^5$}\\
 $^4$\small Department of Mathematics and Applied Mathematics, Virginia Commonwealth University \\
 \small Richmond, VA 23284; {\small\em clarson@vcu.edu}
 }

\maketitle


\begin{abstract}
Let $G_1$ and $G_2$ be disjoint copies of a graph $G$, and let $f:
V(G_1) \rightarrow V(G_2)$ be a function. Then a
\emph{functigraph} $C(G, f)=(V, E)$ has the vertex set $V=V(G_1)
\cup V(G_2)$ and the edge set $E=E(G_1) \cup E(G_2)$ $\cup \ \{uv
 \ | \ u \in V(G_1), v \in V(G_2), v=f(u)\}$. A functigraph is a
generalization of a \emph{permutation graph} (also known as a
\emph{generalized prism}) in the sense of Chartrand and Harary. In
this paper, we study domination in functigraphs. Let $\gamma(G)$ denote the domination number of $G$. It is readily
seen that $\gamma(G) \le \gamma(C(G,f)) \le 2 \gamma(G)$. We
investigate for graphs generally, and for cycles in great detail,
the functions which achieve the upper and lower bounds, as well as
the realization of the intermediate values.
\end{abstract}

\noindent {\bf Key Words:} domination, permutation graphs,
generalized prisms, functigraphs\\

\noindent {\bf 2000 Mathematics Subject Classification:} 05C69, 05C38\\


\section{Introduction and Definitions}

Throughout this paper, $G = (V(G),E(G))$ stands for a finite,
undirected, simple and connected graph with order $|V(G)|$ and
size $|E(G)|$. A set $D \subseteq V(G)$ is a \emph{dominating set}
of $G$ if for every vertex $v \in V(G) \setminus D$, there exists a vertex
$u \in D$ such that $v$ and $u$ are adjacent. The
\textit{domination number} of a graph $G$, denoted by $\gamma(G)$,
is the minimum of the cardinalities of all dominating sets of $G$.
For earlier discussions on domination in graphs, see \cite{B1, B2,
EJ, Ore}. For further reading on domination, refer to \cite{Dom2}
and \cite{Dom1}.\\

For any vertex $v \in V(G)$, the \emph{open neighborhood} of $v$
in $G$, denoted by $N_G(v)$, is the set of all vertices adjacent
to $v$ in $G$. The \emph{closed neighborhood} of $v$, denoted by
$N_G[v]$, is the set $N_G(v) \cup \{v\}$. Throughout the paper, we
denote by $N(v)$ (resp., $N[v]$) the open (resp., closed)
neighborhood of $v$ in $C(G,f)$. The maximum degree of $G$ is denoted by $\Delta(G)$. 
For a given graph $G$ and $S \subseteq V(G)$, we denote by $\langle S \rangle$ the subgraph induced by $S$. 
Refer to \cite{CZ} for additional graph theory terminology.\\

Chartrand and Harary studied planar permutation graphs in
\cite{CH}. Hedetniemi introduced two graphs (not necessarily identical copies) with a function relation between them; he called the resulting object a ``function graph" \cite{H}. Independently, D\"{o}rfler introduced a ``mapping graph", which consists of two disjoint identical copies of a graph and additional edges between the two vertex sets specified by a function \cite{WD}. Later, an extension of permutation graphs, called {\it
functigraph}, was rediscovered and studied in \cite{functi}. In the current paper, we
study domination in functigraphs. We recall the definition of a
functigraph in \cite{functi}.
\begin{dnt}
Let $G_1$ and $G_2$ be two disjoint copies of a graph $G$,
 and let $f$ be a function from $V(G_1)$ to $V(G_2)$. Then a
functigraph $C(G, f)$ has the vertex set
$$V(C(G, f)) = V(G_1) \cup V(G_2),$$
and the edge set
$$E(C(G, f)) = E(G_1) \cup E(G_2) \cup \{ uv \ | \  u \in V(G_1), v\in V(G_2), v=f(u)\}.$$
\end{dnt}

Throughout the paper, $V(G_1)$ denotes the {\em domain} of a
function $f$; $V(G_2)$ denotes the {\em  codomain} of $f$;
$Range(f)$ denotes the {\em range} of $f$. For a set $S \subseteq
V(G_2)$, we denote by $f^{-1}(S)$ the set of all pre-images of the
elements of $S$; i.e., $f^{-1}(S) = \{v \in V(G_1) : f(v) \in
S\}$. Also, $C_n$ denotes a cycle of length $n \ge 3$, and $id$
denotes the identity function. Let $V(G_1)=\{u_1, u_2, \ldots,
u_n\}$ and $V(G_2)=\{v_1, v_2, \ldots, v_n\}$. For simplicity, we
sometimes refer to each vertex of the graph $G_1$ (resp., $G_2$)
by the index $i$ (resp., $i'$) of its label $u_i$ (resp., $v_i$)
for $1 \le i, i' \le n$. When $G=C_n$, we assume that the vertices
of $G_1$ and $G_2$ are labeled cyclically. It is readily seen that
$\gamma(G) \le \gamma(C(G,f)) \le 2 \gamma(G)$. We study the
domination of $C(C_n,f)$ in great detail: for $n \equiv 0 \pmod
3$, we characterize the domination number for an infinite class of
functions and state conditions under which the upper bound is not
achieved; for $n \equiv 1,2 \pmod 3$, we prove that, for any
function $f$, the domination number of $C(C_n,f)$ is strictly less
than $2 \gamma(C_n)$. These results extend and generalize
a result by Burger, Mynhardt, and Weakley in \cite{Mynhardt}.\\

Domination number on permutation graphs (generalized prisms) has
been extensively investigated in a great many articles, among
these are \cite{thesis, Benecke, Burger, Mynhardt, Hartnell}; the
present paper primarily deepens -- and secondarily broadens -- the
current state of knowledge.


\section{Domination Number of Functigraphs}

First we consider the lower and upper bounds of the domination number
of $C(G, f)$.

\begin{prop}  \label{dom_bound}
For any graph $G$, $\gamma(G) \le \gamma(C(G, f)) \le 2
\gamma(G)$.
\end{prop}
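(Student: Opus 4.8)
The plan is to prove the two inequalities separately, each by an explicit construction. For the upper bound, I would start from a minimum dominating set $D$ of $G$ and push it forward into both copies: let $D_1 \subseteq V(G_1)$ and $D_2 \subseteq V(G_2)$ be the images of $D$ under the two canonical isomorphisms $G \to G_1$ and $G \to G_2$. Since $E(G_1)$ is a full copy of $E(G)$, every vertex of $G_1$ lies in $D_1$ or has a neighbor in $D_1$ inside $G_1$, and similarly for $G_2$ and $D_2$. Hence $D_1 \cup D_2$ dominates all of $V(C(G,f)) = V(G_1) \cup V(G_2)$ — the functional edges are not even needed — and $\gamma(C(G,f)) \le |D_1| + |D_2| = 2\gamma(G)$.

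For the lower bound, I would take a minimum dominating set $D$ of $C(G,f)$, split it as $D_1 = D \cap V(G_1)$ and $D_2 = D \cap V(G_2)$, and project everything onto the codomain copy by setting $D' = D_2 \cup f(D_1) \subseteq V(G_2)$. Then $|D'| \le |D_2| + |D_1| = |D| = \gamma(C(G,f))$, so it suffices to verify that $D'$ dominates $G_2 \cong G$. Given any $v \in V(G_2)$, the fact that $D$ dominates $C(G,f)$ forces either $v \in D$, hence $v \in D_2 \subseteq D'$, or $v$ to have a neighbor $w \in D$. If $w \in V(G_2)$, then $vw \in E(G_2)$ and $w \in D_2 \subseteq D'$; if $w \in V(G_1)$, then $vw$ must be a functional edge, so $v = f(w)$ with $w \in D_1$, giving $v \in f(D_1) \subseteq D'$. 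In every case $v$ is dominated by $D'$ within $G_2$, so $D'$ is a dominating set of $G_2$ and $\gamma(G) = \gamma(G_2) \le |D'| \le \gamma(C(G,f))$.

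Neither direction presents a real obstacle; the only point that needs care is the observation that a vertex of $V(G_2)$ can be dominated ``from the other side'' only along a functional edge, i.e., only by a vertex $w$ with $f(w)=v$, which is precisely what makes the forward image $f(D_1)$ — rather than some preimage set — the correct object to adjoin to $D_2$. One should also note the harmless fact that $f$ need not be injective, so $|f(D_1)|$ may be strictly smaller than $|D_1|$; this can only help the inequality.
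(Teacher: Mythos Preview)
Your proof is correct and follows essentially the same approach as the paper: for the upper bound, both take copies of a minimum dominating set of $G$ in each of $G_1$ and $G_2$; for the lower bound, both form $D_2 \cup f(D_1)$ from a minimum dominating set $D$ of $C(G,f)$ and observe that it dominates $G_2$. The only cosmetic difference is that the paper phrases the lower bound as a proof by contradiction (assuming $|D| < \gamma(G)$), whereas you give the direct inequality $\gamma(G) \le |D_2 \cup f(D_1)| \le |D|$.
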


\begin{proof}
Let $D$ be a dominating set of $G$. Since a copy of $D$ in $G_1$
together with a copy of $D$ in $G_2$ form a dominating set of
$C(G, f)$ for any function $f$, the upper bound follows. For the
lower bound, assume there is a dominating set $D$ of $C(G, f)$ such
that $|D| < \gamma(G)$. Let $D_1 = D \cap V(G_1) \neq \emptyset$
and $D_2 = D \cap V(G_2) \neq \emptyset$, with $D_1 \cup D_2 = D$.
Now, for each $x \in D_1$, $x$ dominates exactly one vertex in
$G_2$, namely $f(x)$. And so $D_2 \cup \{f(x) \ | \ x \in D_1\}$
is a dominating set of $G_2$ of cardinality less than or equal to
$|D|$, but $|D| < \gamma(G_2)$
--- a contradiction. \hfill
\end{proof}

Next we consider realization results for an arbitrary graph $G$.

\vspace{.1in}

\begin{theorem} \label{realization}
For any pair of integers $a, b$ such that $1 \le a \le b \le 2a$,
there is a connected graph $G$ for which $\gamma(G) = a$ and
$\gamma(C(G, f))=b$ for some function $f$.
\end{theorem}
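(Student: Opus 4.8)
The plan is to exhibit, for each admissible pair $(a,b)$, an explicit connected graph $G$ together with a function $f$. Set $t = b-a$ and $s = 2a-b$, so $s,t \ge 0$ and $s+t = a$. Take $G$ to consist of $a$ stars $K_{1,N}$ with centers $c_1,c_2,\dots,c_a$, where $c_i$ carries a set $L_i$ of $N$ leaves and $N$ is chosen large, say $N = 2a$; join the stars by adding the path edges $c_1c_2, c_2c_3,\dots,c_{a-1}c_a$ so that $G$ is connected. Since $N > a$, any dominating set of $G$ of size at most $a$ must contain every $c_i$ (otherwise some star $i$ would force all $N$ leaves of $L_i$ into it), while $\{c_1,\dots,c_a\}$ is itself dominating; hence $\gamma(G) = a$. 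Now designate the first $t$ stars as \emph{type~I} and the remaining $s$ stars as \emph{type~II}, and define $f\colon V(G_1)\to V(G_2)$ by letting $f$ be the identity on each type~I star, while on each type~II star $i$ the map $f$ sends every vertex of that star's $G_1$-copy to the codomain center $v_{c_i}$.

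For the upper bound I would verify that
$$D \;=\; \{\, u_{c_i},\, v_{c_i} \;:\; 1 \le i \le t \,\} \;\cup\; \{\, v_{c_i} \;:\; t < i \le a \,\}$$
is a dominating set of $C(G,f)$ of size $2t + s = a+t = b$. On a type~I star $i$, the vertex $u_{c_i}$ dominates the $G_1$-leaves of $L_i$ and (via the $f$-edge) the vertex $v_{c_i}$, while $v_{c_i}$ dominates the $G_2$-leaves of $L_i$ and $u_{c_i}$; on a type~II star $i$, the single vertex $v_{c_i}$ dominates the $G_2$-leaves of $L_i$ (being their center) together with the \emph{entire} $G_1$-copy of that star, since $f$ maps all of it onto $v_{c_i}$. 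The path vertices are the $c_i$'s, already accounted for. Hence $\gamma(C(G,f)) \le b$.

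The heart of the argument is the matching lower bound $\gamma(C(G,f)) \ge b$. Let $D$ be any dominating set of $C(G,f)$; the idea is a two-stage forcing argument exploiting that the star leaves have very small neighborhoods in $C(G,f)$. First, for each $i$ the $N$ vertices in the $G_2$-copy of $L_i$ are each adjacent only to $v_{c_i}$ together with (when star $i$ is type~I) a single $f$-preimage in $G_1$; since $N = 2a \ge b$, if $v_{c_i}\notin D$ then $D$ needs one vertex per leaf and $|D|\ge N\ge b$ already, so we may assume $v_{c_i}\in D$. This puts the $a$ distinct vertices $v_{c_1},\dots,v_{c_a}$ into $D$. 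Second, for each type~I star $i$, the $N$ vertices in the $G_1$-copy of $L_i$ are each adjacent only to $u_{c_i}$ and to a single $f$-image in $G_2$, and the identical counting forces $u_{c_i}\in D$; this adds $t$ further vertices, all lying in $G_1$ and hence distinct from the previous $a$. Therefore $|D| \ge a + t = b$, and combining the two bounds gives $\gamma(C(G,f)) = b$ with $\gamma(G)=a$.

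I expect the main obstacle to be designing $f$ so that the lower-bound count lands on $b$ rather than the naive $2a$: on a type~II star the $G_1$-leaves are dominated "for free" by $v_{c_i}$, so such a star forces only one vertex into $D$, whereas a type~I star forces two — the $t$ type~I stars are precisely what push $\gamma(C(G,f))$ up from $a$ to $b$. The remaining care is routine: the connecting path creates no adjacency between leaves (it meets only the centers $c_i$), and $N = 2a$ is large enough for all counting steps at once ($N>a$ for $\gamma(G)=a$, and $N\ge b$ for the forcing). The extreme cases are handled automatically: $b=a$ yields no type~I stars (so $f$ collapses every star and $\gamma(C(G,f))=a$), while $b=2a$ yields only type~I stars (so $f=id$, $C(G,f)$ is the prism over $G$, and $\gamma=2a$).
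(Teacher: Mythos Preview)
Your proof is correct and follows essentially the same construction as the paper: a chain of $a$ stars with centers joined by a path, where $f$ acts as the identity on some stars and collapses the others to their codomain centers. The only difference is that you take $K_{1,2a}$ instead of the paper's $K_{1,4}$, which lets you replace the paper's somewhat informal ``local'' lower-bound argument by a clean counting argument (if a center is missing from $D$, the $N\ge b$ leaves already force $|D|\ge b$).
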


\begin{proof}
Let the star $S_i\cong K_{1,4}$ have center $c_i$ for $1\leq i\leq a$.
Let $G$ be a chain of $a$ stars; i.e., the disjoint union of $a$
stars such that the centers are connected to form a path of length
$a$ (and no other additional edges) -- see Figure \ref{fig1}. Label the
stars in the chain of the domain $G_1$ by $S_1, S_2, \ldots, S_a$
and label their centers by $c_1, c_2, \ldots, c_a$, respectively.
Likewise, label the stars in the chain of the codomain $G_2$ by
$S_1', S_2', \ldots, S_a'$ and label their centers by $c_1', c_2',
\dots, c_a'$, respectively. More generally, denote by $v'$ the
vertex in $G_2$ corresponding to an arbitrary $v$ in $G_1$.\\

We define $a+1$ functions from $G_1$ to $G_2$ as follows. Let
$f_0$ be the ``identity function"; i.e., $f_0(v)=v'$. For each $i$
from $1$ to $a$, let $f_i$ be the function which collapses $S_1$
through $S_i$ to $c_1'$ through $c_i'$, respectively, and which
acts as the ``identity" on the remaining vertices: $f_i(S_j)=c_j'$
for $1\leq j \leq i$ and $f_i(v)=v'$ for $v\notin
\bigcup_{\scriptstyle 1 \leq j \leq i}V(S_j)$. (See
Figure 1.) Notice $\gamma(G)=a$.\\

\begin{figure}[htbp]
\begin{center}\label{fig1}
\scalebox{0.45}{\input{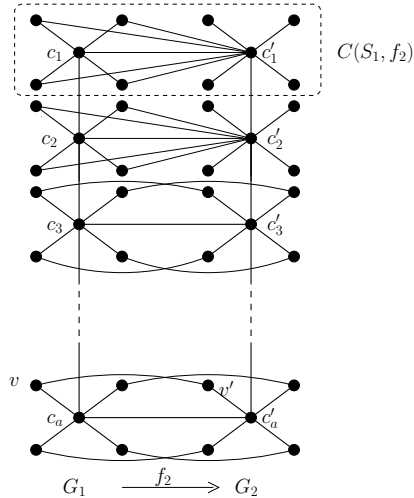}}\caption{Realization Graphs}
\end{center}
\end{figure}

Claim: $\gamma(C(G,f_i))=2a-i$ for $0\leq i\leq a$.\\

First, $\gamma(C(G,f_a))=a$ because $D_a=\{c_1', ..., c_a'\}$
clearly dominates $C(G,f_a)$.\\

Second, consider $C(G,f_0)$. $D_0=\{c_1, ..., c_a, c_1', ...,
c_a'\}$, the set of centers in $G_1$ or $G_2$, is a dominating
set; so $\gamma(C(G,f_0))\leq 2a$ as noted earlier. It suffices to
show that $\gamma(C(G,f_0))\geq 2a$. It is clear that a dominating
set $D$ consisting only of the centers must have size $2a$ --- for
a pendant to be dominated, its neighboring center must be in $D$.
We need to check that the replacement of centers by some (former)
pendants (of $G_1$ or $G_2$) will only result in a dominating set
$D'$ such that $|D'|>|D_0|$. It suffices to check $C(S_i,f_0)$ at
each $i$, a subgraph of $C(G,f_0)$ -- since pendant domination is
a local question: the closed neighborhood of each pendant of
$C(S_i,f_0)$ is contained within $C(S_i,f_0)$. It is easy to see
that the unique minimum dominating set of $C(S_i,f_0)$ consists
of the two centers $c_i$ and $c_i'$.\\

Finally, the set $D_i=\{c_{i+1}, ..., c_a, c_1', ..., c_a'\}$ is a
minimum dominating set of $C(G,f_i)$: In relation to $C(G,f_0)$,
the subset $\{c_1, ..., c_i\}$ of $D_0$ is not needed since
the set $\{c_1', ..., c_i'\}$ dominates $\bigcup_{\scriptstyle 1 \leq j
\leq i}V(S_j)$ in $C(G,f_i)$. The local nature of pendant
domination and the fact that $f_i|_{S_j}=f_0|_{S_j}$ for $j>i$ ensure
that $D_i$ has minimum cardinality.~\hfill
\end{proof}


\section{Characterization of Lower Bound}

We now present a characterization for $\gamma(C(G, f))=\gamma(G)$, in analogy with what was done for permutation-fixers in
\cite{Burger}.\\

\begin{theorem}\label{gen}
Let $G_1$ and $G_2$ be two copies of a graph $G$ in $C(G, f)$.
Then $\gamma(G)=\gamma(C(G,f))$ if, and only if, there are sets
$D_1\subseteq V(G_1)$ and $D_2\subseteq V(G_2)$ satisfying the
following conditions:
\begin{enumerate}
\item $D_1$ dominates $V(G_1) \setminus f^{-1}(D_2)$, \item $D_2$
dominates $V(G_2)\setminus f(D_1)$, \item $D_2 \cup f(D_1)$ is a
minimum dominating set of $G_2$, \item $|D_1|=|f(D_1)|$, \item
$D_2 \cap f(D_1)=\emptyset$, and \item $D_1 \cap
f^{-1}(D_2)=\emptyset$.
\end{enumerate}
\end{theorem}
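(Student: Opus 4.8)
The plan is to prove both implications by passing back and forth between a minimum dominating set of $C(G,f)$ and a minimum dominating set of $G_2$, using the simple structural fact that the only edges between $G_1$ and $G_2$ are the ``function edges'' $u\,f(u)$.

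For the backward implication, suppose $D_1$ and $D_2$ satisfy conditions 1--6. I would set $D := D_1 \cup D_2$ and check that $D$ is a dominating set of $C(G,f)$ of size $\gamma(G)$. Domination: a vertex $u \in V(G_1)$ either lies in $f^{-1}(D_2)$, in which case the edge $u\,f(u)$ reaches $f(u) \in D_2 \subseteq D$, or else lies in $V(G_1)\setminus f^{-1}(D_2)$ and is dominated inside $G_1$ by $D_1$ (condition 1); dually, a vertex $v \in V(G_2)$ either lies in $f(D_1)$ and is reached from a preimage in $D_1$, or lies in $V(G_2)\setminus f(D_1)$ and is dominated inside $G_2$ by $D_2$ (condition 2). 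Cardinality: since $D_1\subseteq V(G_1)$ and $D_2\subseteq V(G_2)$ are disjoint, $|D| = |D_1|+|D_2| = |f(D_1)|+|D_2|$ by condition 4, which equals $|f(D_1)\cup D_2|$ by condition 5, which equals $\gamma(G_2) = \gamma(G)$ by condition 3. Hence $\gamma(C(G,f))\le |D| = \gamma(G)$, and Proposition \ref{dom_bound} closes the loop. (Note condition 6 is not actually needed here.)

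For the forward implication, suppose $\gamma(C(G,f)) = \gamma(G)$, fix a dominating set $D$ of $C(G,f)$ with $|D| = \gamma(G)$, and put $D_1 := D \cap V(G_1)$, $D_2 := D \cap V(G_2)$. Conditions 1 and 2 are forced: a vertex of $V(G_1)\setminus f^{-1}(D_2)$ has its unique $G_2$-neighbor outside $D_2$, so it must be dominated within $G_1$ by $D_1$; symmetrically a vertex of $V(G_2)\setminus f(D_1)$ has all its $G_1$-neighbors outside $D_1$, so it must be dominated within $G_2$ by $D_2$. The heart of the argument is that $D_2 \cup f(D_1)$ dominates $G_2$ — each $v\in V(G_2)$ is dominated in $C(G,f)$ either from $G_2$ by $D_2$, or across a function edge, which puts $v \in f(D_1)$ — together with the chain
\[
\gamma(G_2) \;\le\; |D_2 \cup f(D_1)| \;\le\; |D_2| + |f(D_1)| \;\le\; |D_2| + |D_1| \;=\; |D| \;=\; \gamma(G) \;=\; \gamma(G_2),
\]
which forces equality throughout: the two middle equalities give conditions 5 ($D_2\cap f(D_1)=\emptyset$) and 4 ($|f(D_1)|=|D_1|$), and the outermost gives condition 3. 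Finally, condition 6 holds because it is literally equivalent to condition 5: both $D_1\cap f^{-1}(D_2)=\emptyset$ and $D_2\cap f(D_1)=\emptyset$ say exactly that no vertex of $D_1$ is sent by $f$ into $D_2$.

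I do not anticipate a real obstacle. The only points requiring care are (a) choosing $D$ to have minimum size in the forward direction, so that the displayed chain of inequalities actually collapses, and (b) the degenerate cases $D_1=\emptyset$ or $D_2=\emptyset$, which need no separate treatment since conditions 1 and 2 are then read off directly (vacuously, or as ``$D_2$ dominates all of $G_2$'', etc.). The argument parallels the treatment of permutation-fixers in \cite{Burger}.
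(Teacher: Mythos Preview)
Your proof is correct and follows essentially the same approach as the paper: split a minimum dominating set $D$ of $C(G,f)$ into $D_1$ and $D_2$, then use the inequality chain $\gamma(G_2)\le |D_2\cup f(D_1)|\le |D_2|+|f(D_1)|\le |D_2|+|D_1|=\gamma(G)$ to force equalities and read off conditions 3--5, with conditions 1--2 immediate from the structure of function edges. Your observation that condition~6 is logically equivalent to condition~5 (both say no $u\in D_1$ has $f(u)\in D_2$) is correct and sharpens the paper's presentation, which derives~6 from~5 without noting the equivalence.
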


\begin{proof}
($\Longleftarrow$) Suppose there are sets $D_1\subseteq V(G_1)$
and $D_2\subseteq V(G_2)$ satisfying the specified conditions.
Clearly $D_1\cup D_2$ is a dominating set of $C(G,f)$. By
assumption, $D_2\cup f(D_1)$ is a minimum dominating set of $G_2$.
Since $|D_1| = |f(D_1)|$ and $D_2 \cap f(D_1) = \emptyset$,
$\gamma(G)=\gamma(G_2)=|D_2|+|f(D_1)|=|D_2|+|D_1|$. Since
$\gamma(G) \le \gamma(C(G,f)) \le |D_1|+|D_2|=\gamma(G)$, it
follows that $\gamma(G)=\gamma(C(G,f))$.\\

($\Longrightarrow$) Let $D$ be any minimum dominating set of
$C(G,f)$. Suppose then that $\gamma(G)=\gamma(C(G,f))$ such that
$D_1 = D \cap V(G_1)$ and $D_2 = D \cap V(G_2)$.  So
$\gamma(C(G,f))=|D_1|+|D_2|$. Note that the only vertices in $G_2$
that are dominated by $D_1$ are the vertices in $f(D_1)$ and the
only vertices in $G_1$ that are dominated by $D_2$ are the
vertices in $f^{-1}(D_2)$. Since $D$ is a dominating set of
$C(G,f)$, $D_2$ must dominate every vertex in $V(G_2)\setminus
f(D_1)$, and $D_1$ must dominate every vertex in $V(G_1)\setminus
f^{-1}(D_2)$.\\

Clearly $D_2 \cup f(D_1)$ is a dominating set of $G_2$. Note that
$|D_1|\geq |f(D_1)|$. So $\gamma(G)=\gamma(C(G,f))=|D_1|+|D_2|\geq
|D_2|+|f(D_1)|\geq \gamma(G_2)=\gamma(G)$. But then these terms
must all be equal. In particular, $|D_1|=|f(D_1)|$ and $D_2\cup
f(D_1)$ is a minimum dominating set of $G_2$. Furthermore, $D_2
\cap f(D_1)=\emptyset$, else $D_2\cup f(D_1)$ is a dominating set
of $G_2$ with fewer than $\gamma(G_2)$ vertices. Finally, suppose
there is a vertex $v\in D_1\cap f^{-1}(D_2)$. So $v\in D_1$ and
$v\in f^{-1}(D_2)$. But then $f(v)\in f(D_1)$ and $f(v)\in D_2$.
But $f(D_1)$ and $D_2$ are disjoint. So,  $D_1 \cap
f^{-1}(D_2)=\emptyset$. \hfill
\end{proof}

It is known that for cycles $C_n$ ($n\geq 3$),
$\gamma(C_n)=\lceil\frac{n}{3}\rceil$. We now apply Theorem
\ref{gen} to characterize the lower bound of $\gamma(C(C_n, f))$.

\begin{theorem} \label{LB cycle}
For the cycle $C_n$ ($n \ge 3$), let $G_1$ and $G_2$ be copies of
$C_n$. Then $\gamma(C_n)=\gamma(C(C_n,f))$ if, and only if, there
is a minimum dominating set $D=D_1 \cup D_2$ of $C(C_n,f)$ such
that either:
\begin{enumerate}
\item $D_1 = \emptyset$ and $D_2$ is a minimum dominating set of
$G_2$ and $Range(f)\subseteq D_2$, or \item $n\equiv 1$ (mod $3$),
$D_2$ is a minimum dominating set for $\langle V(G_2) \setminus \{v\} \rangle$,
$D_1=\{w\}$, $f(w)=v$, and $f(V(G_1)\setminus N[w])\subseteq D_2$.
\end{enumerate}
\end{theorem}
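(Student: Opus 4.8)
The plan is to apply Theorem \ref{gen} to $G=C_n$ and translate its six conditions into the two concrete cases listed here. So I would start from a minimum dominating set $D=D_1\cup D_2$ of $C(C_n,f)$ with $|D_1|+|D_2|=\gamma(C_n)=\lceil n/3\rceil$, and from Theorem \ref{gen} I know $|D_1|=|f(D_1)|$, $D_2\cap f(D_1)=\emptyset$, $D_1\cap f^{-1}(D_2)=\emptyset$, $D_2\cup f(D_1)$ is a minimum dominating set of $G_2\cong C_n$, and $D_1$ (resp. $D_2$) dominates $V(G_1)\setminus f^{-1}(D_2)$ (resp. $V(G_2)\setminus f(D_1)$).

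The first case, $D_1=\emptyset$: then conditions 1--6 collapse to ``$D_2$ dominates all of $V(G_2)$'' and ``$D_2$ is a minimum dominating set of $G_2$'', and condition 1 ($D_1$ dominates $V(G_1)\setminus f^{-1}(D_2)$) forces $V(G_1)\subseteq f^{-1}(D_2)$, i.e. $Range(f)\subseteq D_2$; conversely this clearly gives a dominating set of size $\gamma(C_n)$. The real content is showing that if $D_1\neq\emptyset$ then we must be in case 2 and $n\equiv1\pmod 3$. The key counting step: $D_2\cup f(D_1)$ has size exactly $\lceil n/3\rceil$ (by conditions 3,4,5), so $D_1$ is a set of size $|D_1|$ whose image $f(D_1)$ contributes $|D_1|$ distinct vertices to a minimum dominating set of the cycle $C_n$. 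Now I would use the structure of minimum dominating sets of $C_n$: for $n\equiv 0\pmod 3$ a minimum dominating set is an exact efficient dominating set (each vertex dominated exactly once, the set is an independent set at mutual distance $3$), while for $n\equiv 1,2\pmod 3$ there is exactly one ``slack'' — one or two vertices dominated twice, or one edge inside the set, etc. The point is that in $G_1$, the vertices of $D_1$ must dominate $V(G_1)\setminus f^{-1}(D_2)$, but $f^{-1}(D_2)$ is disjoint from $D_1$ and, since the cycle $C_n$ needs $\lceil n/3\rceil$ vertices to dominate, removing $D_1$'s "help" from $G_1$ leaves too much to cover unless $D_1$ is a single vertex and the deficiency is exactly the $n\equiv1$ slack.

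Concretely, I would argue: $D_1$ dominates $N[D_1]\cap V(G_1)$, which has at most $3|D_1|$ vertices, and the remaining $\ge n-3|D_1|$ vertices of $G_1$ lie in $f^{-1}(D_2)$; pulling back, $|D_2|\ge$ (number of vertices of $G_2$ that are images of $f^{-1}(D_2)$-vertices not already covered), and combined with $|D_1|+|D_2|=\lceil n/3\rceil$ and $|D_2|+|f(D_1)| = |D_2|+|D_1| = \lceil n/3\rceil$, one forces $3|D_1| \ge n - $ (small slack), giving $|D_1|=1$ and $n\equiv1\pmod3$. Then with $D_1=\{w\}$, set $v=f(w)$: condition 5 gives $v\notin D_2$, so $D_2$ is a minimum dominating set of $\langle V(G_2)\setminus\{v\}\rangle$ (size $\lceil n/3\rceil-1 = (n-1)/3$, which is exactly $\gamma(P_{n-1})$ when $n\equiv1$), and condition 1 says $w$ dominates $V(G_1)\setminus f^{-1}(D_2)$, i.e. $V(G_1)\setminus N[w]\subseteq f^{-1}(D_2)$, i.e. $f(V(G_1)\setminus N[w])\subseteq D_2$ — matching case 2. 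For the converse in case 2 I would check directly that such $D_1\cup D_2$ dominates $C(C_n,f)$ and has size $1+(n-1)/3=\lceil n/3\rceil$. The main obstacle I expect is the forward counting argument pinning down $|D_1|=1$ and the residue class: it requires carefully combining the ``$D_1$ covers at most $3|D_1|$ of $G_1$'' bound with the two equalities $|D_1|+|D_2|=|f(D_1)|+|D_2|=\gamma(C_n)$ and the fact that a minimum dominating set of $C_n$ has essentially no slack when $3\mid n$, a bit more when $n\equiv2$, and exactly enough when $n\equiv1$ — I would organize this as a short case analysis on $n\bmod 3$, ruling out $n\equiv0$ and $n\equiv2$ when $D_1\neq\emptyset$.
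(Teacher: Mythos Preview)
Your overall strategy matches the paper's: invoke Theorem \ref{gen}, handle the $D_1=\emptyset$ case directly, and for $D_1\neq\emptyset$ do a counting argument organized by $n\bmod 3$. The converse directions and the $D_1=\emptyset$ forward case are fine.

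The gap is in your ``concretely'' paragraph. You count on the $G_1$ side: $D_1$ dominates at most $3|D_1|$ vertices of $G_1$, so $|f^{-1}(D_2)|\ge n-3|D_1|$. But $f$ is an arbitrary function, so $|f^{-1}(D_2)|$ can be as large as $n$ regardless of $|D_2|$; this inequality does not constrain $|D_2|$ at all. Worse, the conclusion you write, ``$3|D_1|\ge n-(\text{small slack})$, giving $|D_1|=1$,'' is backwards: that inequality would force $|D_1|$ to be large, not equal to $1$.

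The fix, which is exactly what the paper does, is to count on the $G_2$ side. A vertex of $G_2$ not dominated by $D_2$ can only be dominated from $G_1$ by landing in $f(D_1)$. Since $D_2$ dominates at most $3|D_2|$ vertices of $G_2$, there are at least $n-3|D_2|$ such vertices, and they must all lie in $f(D_1)$. Hence $|D_1|=|f(D_1)|\ge n-3|D_2|$, so
\[
\left\lceil \tfrac{n}{3}\right\rceil = |D_1|+|D_2| \ge n-2|D_2|,
\]
which gives $|D_2|\ge k$ for $n\in\{3k,3k+1,3k+2\}$, and therefore $|D_1|\le 1$ (with $|D_1|=0$ forced when $n\equiv 0$). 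The case analysis you outline then goes through: for $n\equiv 2$, $|D_2|=k$ would leave at least two vertices of $G_2$ uncovered by $D_2$, forcing $|D_1|\ge 2$, a contradiction; for $n\equiv 1$ with $|D_1|=1$ you recover case 2 exactly as you describe.
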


\begin{figure}[htbp]
\begin{center}\label{1mod3}
\scalebox{0.5}{\input{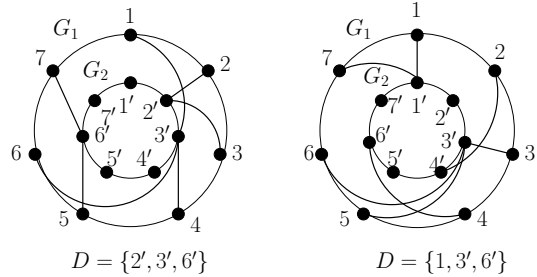}} \caption{Examples of
$\gamma(C(C_n,f))=\gamma(C_n)$ for $n \equiv 1$ (mod 3)}
\end{center}
\end{figure}

\begin{proof}
($\Longleftarrow$) Suppose that there is a minimum dominating set
$D$ of $C(C_n,f)$ satisfying the specified conditions. So
$\gamma(C(C_n,f))=|D|=|D_1|+|D_2|$. If $D_2 \subseteq V(G_2)$ is a
minimum dominating set of $C_n$ and $Range(f)\subseteq D_2$, then
$D_1=\emptyset$. So $\gamma(C_n)=|D_2|=\lceil \frac{n}{3} \rceil$.
Furthermore $\gamma(C(C_n,f))=|D|=|D_1|+|D_2|=0+ \gamma(G_2)$.\\

Suppose $n\equiv 1$ (mod $3$), $D_2$ dominates all but one vertex
$v$ of $G_2$, $D_1=\{w\}$, $f(w)=v$,
 and $f(V(G_1)\setminus N[w])\subseteq D_2$.
 Note that, since $n\equiv 1$ (mod $3$), $n=3k+1$, for some positive integer $k$,
 and $\lceil\frac{n}{3}\rceil=k+1$. By assumption, $\gamma(C(C_n,f))=|D|=|D_1|+|D_2|=1+|D_2|$.
 Since $\gamma(C_n)=k+1$,
 it remains to show that $\gamma(C(C_n,f))=k+1$,
 which is equivalent to showing that $|D_2|=k$.
Since $D_2$ is a minimum dominating set for $\langle V(G_2) \setminus \{v\} \rangle$ and $\langle V(G_2) \setminus \{v\} \rangle$ has domination number $k$, $|D_2|=k$.\\

($\Longrightarrow$) Now suppose that
$\gamma(C_n)=\gamma(C(C_n,f))=\lceil\frac{n}{3}\rceil$. Let $D$ be
a minimum dominating set satisfying the conditions of Theorem
\ref{gen}. There are three cases to consider: $n\equiv 0$ (mod
$3$), $n\equiv 1$ (mod $3$), and $n\equiv 2$ (mod $3$). In each
case, Theorem \ref{gen} implies that $D_2\cup f(D_1)$ is a minimum
dominating set of $G_2$ and $|D_1|=|f(D_1)|$. Since $f(D_1)$ must
include all the vertices not dominated by $D_2$, it follows that
$D$ must contain at least $|D_2|+(n-3|D_2|)=n-2|D_2|$ vertices.\\

If $n\equiv 0$ (mod $3$), then $n=3k$ for some positive integer
$k$ and $\lceil\frac{n}{3}\rceil=k$. Note that $D_2$ dominates at most
$3|D_2|$ vertices in $G_2$. There are at least $n-3|D_2|$ vertices in
$G_2$ which are not dominated by $D_2$. If $|D_2|<k$ then
$\gamma(C(C_n,f))=|D|\geq n-2|D_2|>n-2k=3k-2k=k$, contradicting the
assumption that $\gamma(C(C_n,f))=k$. So $|D_2|=k$. This implies
$D_1=\emptyset$. And this, in turn, implies that $D_2$ must
dominate all the vertices in $G_1$. So $Range(f)\subseteq D_2$.\\

In the remaining two cases, where $n\equiv 1$ or $n\equiv 2$ (mod
$3$), then $n=3k+1$ or $n=3k+2$, respectively, for some positive
integer $k$ and $\gamma(C_n)=\lceil\frac{n}{3}\rceil=k+1$. From
Theorem \ref{gen} it follows that $D_2\cup f(D_1)$ is a minimum
dominating set of $G_2$. Since $D_2$ dominates at most $3|D_2|$
vertices in $G_2$, $D_1$ must dominate at least $n-3|D_2|$
vertices in $G_2$. If $|D_2|<k$, then $\gamma(C(C_n,f))=|D|\geq
n-2|D_2|>n-2k=(3k+1)-2k=k+1$, contradicting the assumption that
$\gamma(C(C_n,f))=k+1$. So $|D_2|\geq k$. Since $|D|=k+1$,
$|D_2|\leq k+1$. If $|D_2|=k+1$, then $D_1=\emptyset$,
$f(D_1)=\emptyset$ and $D_2\cup f(D_1)=D_2$ is a minimum
dominating set of $G_2$. Since $D$ is a dominating set of
$C(C_n,f)$, it follows that $D_2$ must also dominate all the
vertices in $D_1$ and, thus, $Range(f)\subseteq D_2$.\\

Let $n\equiv 1 \pmod 3$. If $|D_2|=k$, then there is at least one
vertex in $G_2$ not dominated by $D_2$. If there are $c>1$
vertices not dominated by $D_2$ then these vertices are a subset
of $f(D_1)$ and Theorem \ref{gen} guarantees that
$|D_1|=|f(D_1)|\geq c$ and, thus, $\gamma(C(C_n,f))\geq k+c>k+1$,
contradicting our assumption. So $c=1$. There is only one vertex
$v\in V(G_2)$ which is not dominated by $D_2$. $D_1$ can only
contain a single vertex $w$ (or $|D|$ will again be too large) and
$f(w)=v$. Since $w$ dominates $N[w]$ in $G_1$, it follows that
$D_2$ must dominate $V(G_1)\setminus N[w]$. So $f(V(G_1)\setminus
N[w])\subseteq D_2$.\\

Let $n\equiv 2 \pmod 3$. If $|D_2|=k$, then there are at least two
vertices in $G_2$ not dominated by $D_2$. But then these vertices
must be a subset of $f(D_1)$ and $|f(D_1)|\geq 2$. Since
$|D_1|=|f(D_1)|$, $|D_1|\geq 2$. But then
$k+1=\gamma(C(G,f))=|D|=|D_1|+|D_2|\geq 2+k$, which is a
contradiction. So $|D_2|=k+1$. \hfill
\end{proof}

Next we consider the domination number of $C(C_3,f)$.

\begin{lemma}\label{cycle 3}
Let $G_1$ and $G_2$ be two copies of $C_3$. Then $\gamma(C(C_3,
f))=2 \gamma(C_3)$ if and only if $f$ is not a constant function.
\end{lemma}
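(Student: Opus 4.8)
The plan is to prove the two directions of the equivalence separately, and to rely on the fact that $\gamma(C_3)=1$, so that $2\gamma(C_3)=2$ and the only alternative is $\gamma(C(C_3,f))=1$. For the ``if'' direction, suppose $f$ is constant, say $f(V(G_1))=\{v\}$ for some $v\in V(G_2)$. Then in $C(C_3,f)$ every vertex of $G_1$ is adjacent to $v$, and $v$ together with its two neighbours in $G_2$ form a triangle, so $v$ dominates all of $V(G_2)$ as well. Hence $\{v\}$ is a dominating set and $\gamma(C(C_3,f))=1=\gamma(C_3)<2\gamma(C_3)$. This is the easy half.

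For the ``only if'' direction, I argue the contrapositive: if $f$ is \emph{not} constant, then $\gamma(C(C_3,f))=2$; combined with the universal upper bound $\gamma(C(C_3,f))\le 2\gamma(C_3)=2$ from Proposition \ref{dom_bound}, it suffices to rule out $\gamma(C(C_3,f))=1$. So suppose, for contradiction, that some single vertex $x$ dominates all of $C(C_3,f)$. Split into the two cases $x\in V(G_1)$ and $x\in V(G_2)$. If $x\in V(G_1)$: then $x$ must dominate all of $V(G_2)$, but $x$ has exactly one neighbour in $G_2$, namely $f(x)$, so $x$ dominates only $f(x)$ among the three vertices of $G_2$ — a contradiction since $|V(G_2)|=3$. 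If $x\in V(G_2)$: then $x$ must dominate all of $V(G_1)$; the only neighbours of $x$ in $G_1$ are the vertices of $f^{-1}(x)$, so we would need $f^{-1}(x)=V(G_1)$, i.e. $f$ is the constant function with value $x$ — contradicting the hypothesis that $f$ is not constant. Either way we reach a contradiction, so $\gamma(C(C_3,f))\ge 2$, hence $=2=2\gamma(C_3)$.

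The argument is essentially a direct case analysis and there is no real obstacle; the only thing to be careful about is making sure both the $G_1$-side and $G_2$-side cases for the putative single dominator are handled, since the functigraph edges are directed-looking (each $u\in V(G_1)$ has a unique out-neighbour $f(u)$ in $G_2$, while a vertex $v\in V(G_2)$ may have many in-neighbours $f^{-1}(v)$), so the two cases are genuinely asymmetric. One could alternatively phrase the ``only if'' direction through Theorem \ref{LB cycle}: since $3\equiv 0\pmod 3$, $\gamma(C(C_3,f))=\gamma(C_3)$ forces condition (1), i.e. $D_1=\emptyset$ and $Range(f)\subseteq D_2$ with $|D_2|=\gamma(C_3)=1$, which immediately says $f$ is constant; but the self-contained single-vertex argument above is shorter and avoids invoking the more elaborate characterization.
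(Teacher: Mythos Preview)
Your argument is mathematically correct and follows the same outline as the paper's: both show that a constant $f$ yields a single dominating vertex (the common image), and that a non-constant $f$ admits no single dominator. The paper dispatches the non-constant case with a degree count---if $f$ is not constant then $\Delta(C(C_3,f))\le 4$, so $|N[v]|\le 5<6$ for every $v$---whereas you do the equivalent explicit case split on whether the putative dominator lies in $G_1$ or $G_2$; either argument is fine.

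One presentational slip: your ``if'' and ``only if'' labels are reversed. The statement reads ``$\gamma=2\gamma(C_3)$ if and only if $f$ is not constant'', so the \emph{if} direction is ``$f$ not constant $\Rightarrow\gamma=2$'' --- that is your second paragraph, and it is a direct proof, not a contrapositive. The \emph{only if} direction is ``$\gamma=2\Rightarrow f$ not constant'', and your first paragraph proves its contrapositive (``$f$ constant $\Rightarrow\gamma=1$''). Swap the two labels and drop the word ``contrapositive'' from the second paragraph.
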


\begin{proof}
($\Longleftarrow$) Suppose that $f$ is not a constant function.
Then, for each vertex $v \in V(C(C_3,f))$, $\deg(v) \le 4$ and
hence $N[v] \varsubsetneq V(C(C_3, f))$. Thus $\gamma(C(C_3, f))
\ge 2$. Since there exists a dominating set consisting of one
vertex from each of $G_1$ and $G_2$, $\gamma (C(C_3, f))=2$.\\

($\Longrightarrow$) Suppose that $f$ is a constant
function, say $f(w)=a$ for some $a \in V(G_2)$ and for all $w \in
V(G_1)$. Then $N[a]=V(C(C_3, f))$, and thus $\gamma(C(C_3,
f))=1=\gamma(C_3)$. \hfill
\end{proof}

As an immediate consequence of Theorem \ref{LB cycle} and Lemma
\ref{cycle 3}, we have the following.

\begin{cor} \label{cor on permutation}
There is no permutation $f$ such that $\gamma(C(C_n,
f))=\gamma(C_n)$ for $n=3$ or $n \ge 5.$
\end{cor}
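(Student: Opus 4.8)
The plan is to combine Theorem \ref{LB cycle} with Lemma \ref{cycle 3} and a short case analysis on $n \bmod 3$. The corollary claims that a permutation $f$ can never realize the lower bound $\gamma(C(C_n,f))=\gamma(C_n)$ except possibly for $n=4$; so I must show that each of the two scenarios in Theorem \ref{LB cycle} is impossible when $f$ is a bijection, except when $n=4$, and separately rule out $n=3$ using Lemma \ref{cycle 3}.

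First, I would dispose of $n=3$: by Lemma \ref{cycle 3}, $\gamma(C(C_3,f))=\gamma(C_3)=1$ forces $f$ to be a constant function, which is certainly not a permutation (a permutation of a $3$-element set is never constant). Now suppose $n\ge 5$ and that $f$ is a permutation with $\gamma(C(C_n,f))=\gamma(C_n)$. Apply Theorem \ref{LB cycle}: either (1) $D_1=\emptyset$, $D_2$ is a minimum dominating set of $G_2$, and $Range(f)\subseteq D_2$; or (2) $n\equiv 1\pmod 3$ with the described structure. In case (1), since $f$ is a permutation we have $Range(f)=V(G_2)$, so $D_2=V(G_2)$, giving $|D_2|=n$. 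But $|D_2|=\gamma(C_n)=\lceil n/3\rceil < n$ for $n\ge 2$ — contradiction. So case (1) cannot occur for any $n\ge 3$ when $f$ is a permutation.

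It remains to rule out case (2) for $n\equiv 1\pmod 3$, $n\ge 7$ (the smallest such $n\ge 5$; note $n=4$ is exactly the exceptional value left open by the corollary, consistent with its statement). Here $D_1=\{w\}$, $f(w)=v$, $D_2$ is a minimum dominating set of $\langle V(G_2)\setminus\{v\}\rangle$ (so $|D_2|=k$ where $n=3k+1$), and $f(V(G_1)\setminus N[w])\subseteq D_2$. Since $f$ is a permutation, $f$ restricted to $V(G_1)\setminus N[w]$ is injective into $D_2$, and $|V(G_1)\setminus N[w]| = n-3$ (as $w$ has degree $2$ in the cycle $G_1$), while $|D_2|=k=(n-1)/3$. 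The inequality $n-3 \le (n-1)/3$ gives $3n-9 \le n-1$, i.e. $2n \le 8$, i.e. $n\le 4$ — contradicting $n\ge 7$. Hence case (2) is also impossible, completing the proof.

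The main obstacle — really the only subtle point — is making sure the counting in case (2) is airtight: one must correctly compute $|N[w]|=3$ in the cycle $G_1$ (so that $|V(G_1)\setminus N[w]|=n-3$), correctly identify $|D_2|=k=(n-1)/3$ from the fact that $\langle V(G_2)\setminus\{v\}\rangle$ is a path $P_{n-1}$ with domination number $\lceil (n-1)/3\rceil = k$, and then observe that injectivity of the permutation $f$ forces $n-3\le k$, which fails for all $n\ge 7$. Everything else is bookkeeping about what "permutation" forces in the two cases of Theorem \ref{LB cycle}, together with the clean appeal to Lemma \ref{cycle 3} for $n=3$.
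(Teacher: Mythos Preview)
Your proof is correct and follows precisely the approach the paper intends: the paper states the corollary as an immediate consequence of Theorem~\ref{LB cycle} and Lemma~\ref{cycle 3}, and you have simply filled in the details of that deduction. Your case analysis and the counting argument $n-3\le (n-1)/3\Rightarrow n\le 4$ ruling out case~(2) for permutations when $n\ge 7$ are exactly what is needed.
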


Now we consider $C(G, f)$ when $G=C_n$ ($n \ge 3$) and $f$ is the
identity function.

\begin{theorem} \label{C_n identity}
Let $G_1$ and $G_2$ be two copies of the cycle $C_n$ for $n \ge
3$. Then
\begin{displaymath}
\gamma(C(C_n,id)) = \left\{ \begin{array}{lll}
  \lceil\frac{n}{2}\rceil & \mbox{ if } n \not\equiv 2 \pmod 4, \\
  \frac{n}{2}+1 & \mbox{ if } n \equiv 2 \pmod 4 .
  \end{array}
  \right.
\end{displaymath}
\end{theorem}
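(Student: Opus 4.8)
The plan is to combine a counting lower bound with explicit ``staircase'' constructions, and then --- the crux --- to rule out dominating sets of size $n/2$ when $n\equiv 2\pmod 4$ by showing $C(C_n,id)$ admits no efficient dominating set in that case. First note that in $C(C_n,id)$ each $u_i$ is adjacent exactly to $u_{i-1},u_{i+1},v_i$ and each $v_i$ exactly to $v_{i-1},v_{i+1},u_i$ (indices mod $n$), so the graph is $3$-regular on $2n$ vertices with $|N[x]|=4$ for every $x$; hence for any dominating set $D$ we have $2n\le\sum_{x\in D}|N[x]|=4|D|$, giving $\gamma(C(C_n,id))\ge\lceil n/2\rceil$.

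For the upper bound, for $k\ge 0$ set $P_k=\{u_{4k+1},v_{4k+3}\}$ (indices mod $n$). Since $N[u_{4k+1}]=\{u_{4k},u_{4k+1},u_{4k+2},v_{4k+1}\}$ and $N[v_{4k+3}]=\{v_{4k+2},v_{4k+3},v_{4k+4},u_{4k+3}\}$, the union $P_0\cup\cdots\cup P_{t-1}$ dominates every $u_i$ with $i\in\{0,\dots,4t-1\}$ and every $v_i$ with $i\in\{1,\dots,4t\}$. Writing $n=4t+r$ with $r\in\{0,1,2,3\}$: if $r=0$ then $P_0\cup\cdots\cup P_{t-1}$ (size $n/2$) dominates $C(C_n,id)$; if $r=3$ then $P_0\cup\cdots\cup P_{t}$ (size $\lceil n/2\rceil$) dominates it; if $r=1$ then $P_0\cup\cdots\cup P_{t-1}$ leaves only $u_{4t}$ and $v_n$ undominated, and adjoining $u_n$ (whose closed neighborhood contains both) yields a dominating set of size $\lceil n/2\rceil$; if $r=2$ then $P_0\cup\cdots\cup P_{t-1}$ leaves only $u_{4t},u_{4t+1},v_{4t+1},v_n$ undominated, and adjoining $u_{4t+1}$ and $v_n$ yields a dominating set of size $n/2+1$. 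Together with the lower bound, this settles every case except $n\equiv 2\pmod 4$, where so far only $n/2\le\gamma(C(C_n,id))\le n/2+1$ is known.

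To finish, suppose $n\equiv 2\pmod 4$ and $D$ is a dominating set with $|D|=n/2$. Then $\sum_{x\in D}|N[x]|=2n=|V(C(C_n,id))|$, so the sets $N[x]$, $x\in D$, partition the vertex set. Call $\{u_i,v_i\}$ column $i$, and classify it as type $A$ if $u_i\in D$, type $B$ if $v_i\in D$, and type $E$ otherwise (note $u_i$ and $v_i$ cannot both lie in $D$, else $u_i\in N[u_i]\cap N[v_i]$). The partition property forces: (i) no two adjacent columns both lie in $\{A,B\}$, since adjacent columns of pattern $AA$, $BB$, or $AB$ each yield a vertex lying in the closed neighborhoods of two distinct members of $D$; and (ii) each type-$E$ column has exactly one type-$A$ neighbor and exactly one type-$B$ neighbor, because its $u$-vertex must be dominated by a neighboring column (which must then be type $A$) and its $v$-vertex likewise forces a type-$B$ neighbor, and a single column cannot be both. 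As $D\ne\emptyset$ there is an $\{A,B\}$-column, and hence also an $E$-column (otherwise all columns lie in $\{A,B\}$, contradicting (i)); by (i) and (ii) the cyclic sequence of column types strictly alternates $\{A,B\},E,\{A,B\},E,\dots$, so $n=2s$ with $s=n/2$. Reading the types $X_1,\dots,X_s$ of the $\{A,B\}$-columns in cyclic order, (ii) makes consecutive ones --- the two neighbors of the $E$-column between them --- have opposite types, so $X_1,\dots,X_s$ properly $2$-colors the cycle $C_s$, forcing $s$ even, i.e. $n\equiv 0\pmod 4$ --- a contradiction. Hence $\gamma(C(C_n,id))\ge n/2+1$ when $n\equiv 2\pmod 4$, and the stated formula follows.

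The hard part is this last step: one must distill the purely local consequences of the partitioning condition into the rigid alternating ``$A\,E\,B\,E$'' pattern, whose period $4$ produces the divisibility obstruction; the staircase verifications in the second paragraph are routine, requiring only care at the wrap-around.
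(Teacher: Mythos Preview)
Your proof is correct. The explicit staircase constructions match the paper's upper-bound sets in spirit and in effect. The real difference lies in the $n\equiv 2\pmod 4$ lower bound. The paper runs a \emph{descent} argument: after placing one vertex of a putative dominating set $D$ (say $v_1$), the requirement that no vertex be doubly dominated forces which nearby vertices must or must not lie in $D$; deleting the already-dominated vertices yields a smaller instance on which the same reasoning applies, and after $k$ iterations one is left with an $8$-vertex graph that provably needs three dominators, giving $|D|\ge 2k+2$. You instead observe that $|D|=n/2$ would make $D$ an \emph{efficient} dominating set (the closed neighborhoods partition $V$), recast this as a cyclic word over the column types $A,B,E$, and extract the forced period-$4$ pattern $A\,E\,B\,E$, whose compatibility with the cycle length is exactly the condition $4\mid n$. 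Your route is more structural and makes transparent why the residue of $n$ modulo $4$ governs the answer (equivalently: the prism $C_n\times K_2$ has a perfect dominating set iff $4\mid n$); the paper's descent is more procedural. Both arguments are short and self-contained.
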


\begin{proof}
Since $C(C_n, id)$ is 3-regular, each vertex in $C(C_n, id)$ can
dominate 4 vertices. We consider four cases.\\

\emph{Case 1. $n=4k$:} Since $|V(C(C_n, id))|=8k$, we have
$\gamma(C(C_n, id)) \ge \lceil\frac{8k}{4}\rceil=2k$. Since
$\cup_{j=0}^{k-1} \{4j+1, (4j+3)'\}$ is a dominating set of $C(C_n, id)$ with cardinality $2k$, we conclude that
$\gamma(C(C_n, id))=2k=\lceil\frac{n}{2}\rceil$.\\

\emph{Case 2. $n=4k+1$:} Since $|V(C(C_n, id))|=2(4k+1)=8k+2$, we
have $\gamma(C(C_n, id)) \ge \lceil \frac{8k+2}{4}\rceil=2k+1$.
Since $(\cup_{j=0}^{k} \{4j+1\}) \bigcup(\cup_{i=0}^{k-1}
\{(4i+3)'\})$ is a dominating set of $C(C_n, id)$ with
cardinality $2k+1$, we have $\gamma(C(C_n,
id))=2k+1=\lceil\frac{n}{2}\rceil$.\\

\emph{Case 3. $n=4k+2$:} Notice that $(\cup_{j=0}^{k}\{4j+1\})
\bigcup (\cup_{i=0}^{k-1}\{(4i+3)'\}) \bigcup \{(4k+2)'\}$ is a
dominating set of $C(C_n, id)$ with cardinality
$2k+2=\frac{n}{2}+1$; thus $\gamma(C(C_n,id))\leq 2k+2$. Since
$|V(C(C_n, id))|=2(4k+2)=8k+4$, $\gamma(C(C_n, id)) \ge
\lceil\frac{8k+4}{4}\rceil=2k+1$; indeed, $\gamma(C(C_n,
id))=2k+1$ only if every vertex is dominated by exactly one vertex
of a dominating set; i.e., no double domination is allowed.
However, we show that there must exist a doubly-dominated vertex
for any dominating set by the following \emph{descent} argument:
Let the graph $A_0$ be $P_{4k+3}\times K_2$ where the bottom row
is labeled $1,2,\ldots,4k+2,1$ and the top row is labeled
$1',2',\ldots,(4k+2)',1'$; note that $C(C_n,id)$ is obtained by
identifying the two end-edges each with end-vertices labeled $1$
and $1'$. Without loss of generality, choose $1'$ to be in
a dominating set $D$. For each vertex to be singly dominated, we
delete vertices $1'(s), 1(s), 2', \mbox{ and } (4k+2)'$, as well
as their incident edges, to obtain a derived graph $A_1$. In
$A_1$, vertices $2$ and $4k+2$ are end-vertices and neither may
belong to $D$ as each only dominates two vertices in $A_1$. This
forces support vertices $3$ and $4k+1$ in $A_1$ to be in $D$.
Deleting vertices $2, 3, 3', 4, 4k+2, 4k+1, (4k+1)', \mbox{ and }
4k$ and incident edges results in the second derived graph $A_2$.
After $k$ iterations, $A_k$ is the extension of $P_3\times P_2$ by
two leaves at both ends of either the top or the bottom row (see
Figure \ref{descent}); $A_k$, which has eight vertices, clearly
requires three vertices to be dominated.
\begin{figure}[htbp]
\begin{center}
\scalebox{0.45}{\input{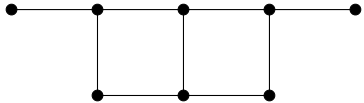}} \caption{$A_k$ in the
$n=4k+2$ case}\label{descent}
\end{center}
\end{figure}
Thus, we conclude that $\gamma(C(C_n, id))=2k+2=\frac{n}{2}+1$.\\

\emph{Case 4. $n=4k+3$:} Since $|V(C(C_n, id))|=2(4k+3)=8k+6$, we
have $\gamma(C(C_n, id)) \ge \lceil\frac{8k+6}{4}\rceil=2k+2$.
Since $\cup_{j=0}^{k} \{4j+1, (4j+3)'\}$ is a dominating set of $C(C_n, id)$ with cardinality $2k+2$, we conclude that
$\gamma(C(C_n, id)=2k+2=\lceil\frac{n}{2}\rceil$. \hfill
\end{proof}

As a consequence of Theorem \ref{C_n identity}, we have the
following result.

\begin{cor}\label{id}
\begin{enumerate}
\item $\gamma(C(C_n, id))=\gamma(C_n)$ if and only if $n=4$. \item
$\gamma(C(C_n, id))=2\gamma(C_n)$ if and only if $n=3$ or $n=6$.
\end{enumerate}
\end{cor}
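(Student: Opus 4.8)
The plan is to derive the corollary purely by comparing closed forms: Theorem~\ref{C_n identity} already evaluates $\gamma(C(C_n,id))$, and the classical identity $\gamma(C_n)=\lceil n/3\rceil$ (recalled in the excerpt) evaluates the right-hand side. So I would begin by writing down
\[
\gamma(C(C_n,id))=\begin{cases}\lceil n/2\rceil & \text{if } n\not\equiv 2\pmod 4,\\ n/2+1 & \text{if } n\equiv 2\pmod 4,\end{cases}
\qquad \gamma(C_n)=\lceil n/3\rceil,
\]
and then, for each of the two parts, split on the residue of $n$ modulo $4$, reducing each part to an elementary inequality between ceilings that fails for all large $n$ and is checked by hand for the finitely many remaining small values.

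For part~(1) I would solve $\gamma(C(C_n,id))=\lceil n/3\rceil$. If $n\not\equiv 2\pmod 4$, then from $\lceil n/2\rceil\ge n/2$ and $\lceil n/3\rceil\le (n+2)/3$ one gets $\lceil n/2\rceil-\lceil n/3\rceil\ge (n-4)/6$, which is positive for $n\ge 5$; hence equality forces $n\in\{3,4\}$, and a direct check of these two values leaves only $n=4$. If $n\equiv 2\pmod 4$ (so $n\ge 6$), the same two bounds give $n/2+1-\lceil n/3\rceil\ge (n+2)/6>0$, so there is no solution in this class. This yields $\gamma(C(C_n,id))=\gamma(C_n)$ iff $n=4$.

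For part~(2) I would solve $\gamma(C(C_n,id))=2\lceil n/3\rceil$. If $n\not\equiv 2\pmod 4$, then $2\lceil n/3\rceil\ge 2n/3$ and $\lceil n/2\rceil\le (n+1)/2$ give $2\lceil n/3\rceil-\lceil n/2\rceil\ge (n-3)/6$, which is positive for $n\ge 4$ and zero at $n=3$; since $n=3$ does give equality ($\lceil 3/2\rceil=2=2\lceil 3/3\rceil$), the only solution here is $n=3$. If $n\equiv 2\pmod 4$ (so $n\ge 6$), then $2\lceil n/3\rceil-(n/2+1)\ge 2n/3-n/2-1=(n-6)/6$, which is positive for $n>6$ and zero at $n=6$; since $n=6$ gives equality ($2\lceil 6/3\rceil=4=6/2+1$), the only solution in this class is $n=6$. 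Hence $\gamma(C(C_n,id))=2\gamma(C_n)$ iff $n\in\{3,6\}$.

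I do not expect a genuine obstacle: once Theorem~\ref{C_n identity} is available, this is a finite case check. The only points requiring a little care are to keep the ceiling estimates sharp enough that the handful of residual small cases ($n=3,4,5$ and the first few values with $n\equiv 2\pmod 4$) are genuinely covered, and to keep in mind the standing hypothesis $n\ge 3$, which is precisely what excludes the spurious ``solution'' $n=2$ of $n/2+1=2\lceil n/3\rceil$ in part~(2).
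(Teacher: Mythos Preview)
Your proposal is correct and is essentially the approach implicit in the paper: the paper states the corollary without proof, as a direct consequence of Theorem~\ref{C_n identity} together with $\gamma(C_n)=\lceil n/3\rceil$, and your argument simply makes this comparison explicit via elementary ceiling estimates and a finite check. There is nothing to add.
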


By Corollary \ref{cor on permutation} and Theorem \ref{C_n
identity}, we have the following result.

\begin{prop}
For a permutation $f$, $\gamma(C(C_n, f))=\gamma(C_n)$ if and only
if $C(C_n, f) \cong C(C_4, id)$.
\end{prop}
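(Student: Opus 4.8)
The plan is to prove both implications. The reverse implication is immediate: if $C(C_n,f)\cong C(C_4,id)$ then comparing orders gives $2n=8$, so $n=4$, and since $\gamma$ is an isomorphism invariant, $\gamma(C(C_n,f))=\gamma(C(C_4,id))=\gamma(C_4)=\gamma(C_n)$, the middle equality being Corollary~\ref{id}(1).

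For the forward implication, let $f$ be a permutation with $\gamma(C(C_n,f))=\gamma(C_n)$. Corollary~\ref{cor on permutation} excludes $n=3$ and $n\ge 5$, so $n=4$ and $\gamma(C(C_4,f))=\gamma(C_4)=2$; it then remains to show that any permutation $f$ of $V(C_4)$ with $\gamma(C(C_4,f))=2$ gives $C(C_4,f)\cong C(C_4,id)$. I would argue this in two steps: first that $f$ must be an automorphism of $C_4$, then that every such $f$ produces a functigraph isomorphic to $C(C_4,id)$. For the first step, note that $C(C_4,f)$ is $3$-regular of order $8$, so a dominating set $\{x,y\}$ of size $2$ has $N[x]$ and $N[y]$ disjoint of size $4$, partitioning $V(C(C_4,f))$. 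They cannot both lie in $V(G_1)$ (nor both in $V(G_2)$): otherwise $N[x]\cap V(G_1)$ and $N[y]\cap V(G_1)$ would be two $3$-element subsets of the $4$-set $V(G_1)$ and would meet. So one of $x,y$ lies in $V(G_1)$ and the other in $V(G_2)$; write $x=u_i$, $y=v_j$. Using $N[u_i]=\{u_{i-1},u_i,u_{i+1},v_{f(i)}\}$ and $N[v_j]=\{v_{j-1},v_j,v_{j+1},u_{f^{-1}(j)}\}$ (the single cross-neighbor of $v_j$ exists because $f$ is a bijection), and matching the $G_1$-parts and the $G_2$-parts of the partition, one is forced to $f^{-1}(j)=i+2$ and $f(i)=j+2$ (indices mod $4$), hence $f(i)-f(i+2)\equiv 2\pmod 4$. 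Thus $\{f(i),f(i+2)\}$ is one of the two classes of the bipartition of $C_4$; since $\{i,i+2\}$ and $\{i+1,i+3\}$ are precisely those two classes and $f$ is a bijection, $f$ carries each bipartition class onto a bipartition class, which is equivalent to $f\in\mathrm{Aut}(C_4)$. For the second step, when $f\in\mathrm{Aut}(C_4)$ the map $u_i\mapsto u_i$, $v_j\mapsto v_{f^{-1}(j)}$ is a graph isomorphism $C(C_4,f)\to C(C_4,id)$: it is an automorphism on each copy since $f^{-1}\in\mathrm{Aut}(C_4)$, and it carries each cross edge $u_iv_{f(i)}$ to $u_iv_i$.

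I expect the main obstacle to be the middle of the forward direction: deriving the congruence $f(i)-f(i+2)\equiv 2\pmod 4$ from the sole hypothesis $\gamma(C(C_4,f))=2$ and recognizing it as the statement that $f$ preserves the bipartition of $C_4$. Everything else is either a direct appeal to Corollaries~\ref{cor on permutation} and \ref{id} or a short verification, the points needing care being the mod-$4$ index bookkeeping and the observation that a permutation of $V(C_4)$ preserving the partition $\{\{1,3\},\{2,4\}\}$ is exactly an element of $\mathrm{Aut}(C_4)$ (both are groups of order $8$). As a streamlined alternative for the second step, one may instead note that $f\in\mathrm{Aut}(C_4)$ makes $C(C_4,f)$ bipartite, and that the $3$-cube $C(C_4,id)$ is the unique $3$-regular bipartite graph on $8$ vertices.
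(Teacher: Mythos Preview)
Your proof is correct and takes a genuinely different route from the paper's for the forward direction. After reducing to $n=4$ via Corollary~\ref{cor on permutation}, the paper invokes the classification (citing \cite{CH, functi}) that $C(C_4,f)$ for a permutation $f$ is isomorphic to one of exactly two graphs --- the $3$-cube $C(C_4,id)$ or a second $3$-regular graph (B) --- and then rules out (B) by a small case check: since the graph is $3$-regular on $8$ vertices, a $2$-element dominating set $\{w_1,w_2\}$ would require $N[w_1]\cap N[w_2]=\emptyset$, and the paper verifies by symmetry that no such pair exists in (B). Your argument instead extracts from the hypothesis $\gamma=2$ the structural constraint $f(i)-f(i+2)\equiv 2\pmod 4$, recognizes this as $f\in\mathrm{Aut}(C_4)$, and then writes down the isomorphism $u_i\mapsto u_i$, $v_j\mapsto v_{f^{-1}(j)}$ explicitly. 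Your approach is more self-contained (it does not rely on the external enumeration of permutation graphs on $C_4$) and explains \emph{why} only automorphisms of $C_4$ can achieve $\gamma=2$; the paper's approach is shorter on the page precisely because it outsources that enumeration to the literature.
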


\begin{proof}
($\Longleftarrow$) If $C(C_4, f) \cong C(C_4, id)$, then $\gamma(C_4)=2=\gamma(C(C_4, id))$ by Theorem
\ref{C_n identity}.\\

\noindent ($\Longrightarrow$) Let $\gamma(C(C_n, f))=\gamma(C_n)$
for $n \ge 3$. By Corollary \ref{cor on permutation}, $n=4$. If
$f$ is a permutation, then $C(C_4, f)$ is isomorphic to the graph
(A) or (B) in Figure \ref{C4} (refer to \cite{CH, functi} for details).
\begin{figure}[htbp]
\begin{center}
\scalebox{0.5}{\input{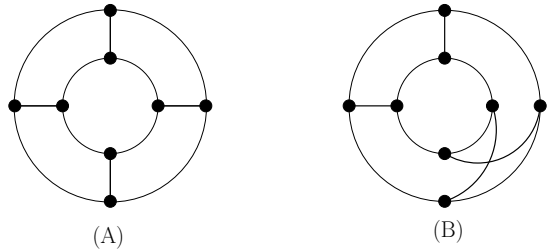}} \caption{Two
non-isomorphic graphs of $C(C_4, f)$ for a permutation $f$}
\label{C4}
\end{center}
\end{figure}
If $C(C_4, f) \cong C(C_4, id)$, then we are done. If $C(C_4, f)$ is
as in (B) of Figure \ref{C4}, we claim that $\gamma(C(C_4, f)) \ge
3$.

\vspace{.1in}

Since $|V(C(C_4, f))|=8$ and $C(C_4, f)$ is 3-regular, $D=\{w_1,
w_2\}$ dominates $C(C_4, f)$ only if no vertex in $C(C_4, f)$ is
dominated by both $w_1$ and $w_2$. It suffices to consider two
cases, using the fact that $C(C_4, f) \cong C(C_4, f^{-1})$.

\vspace{.05in}

(i) $D=\{w_1, w_2\} \subseteq V(G_1)$, 

\vspace{.05in}

(ii) $w_1 \in V(G_1)$ and $w_2 \in V(G_2)$.

\vspace{.1in}

Also, we only need to consider $w_1$ and $w_2$ such that $w_1w_2 \not\in
E(C(C_4, f))$. By symmetry, there is only one specific
case to check in case (i). In case (ii), by fixing a vertex in $V(G_1)$, we see
that there are three cases to check. In each case, for any
$D=\{w_1,w_2\}$, $N[w_1] \cap N[w_2] \neq \emptyset$. Thus
$\gamma(C(C_4, f)) > 2$.~\hfill
\end{proof}


\section{Upper Bound of $\gamma(C(C_n,f))$}

In this section we investigate domination number of functigraphs
for cycles: We show that $\gamma(C(C_n, f)) < 2 \gamma(C_n)$ for
$n \equiv 1, 2 \pmod 3$. For $n \equiv 0 \pmod 3$, we characterize
the domination number for an infinite class of functions and state
conditions under which the upper bound is not achieved. Our result
in this section generalizes a result of Burger, Mynhardt, and
Weakley in \cite{Mynhardt} which states that no cycle other than
$C_3$ and $C_6$ is a \textit{universal doubler} (i.e., only for
$n=3,6$, $\gamma(C(C_n, f)) =2\gamma(C_n)$ for any permutation
$f$).

\subsection{A characterization of $\gamma(C(C_{3k+1}, f))$}

\begin{prop}\label{n=1 mod 3}
For any function $f$, $\gamma(C(C_{3k+1},f)) < 2 \gamma(C_{3k+1})$
for $k \in \mathbb{Z}^+$.
\end{prop}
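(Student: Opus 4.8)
The plan is to exhibit, for every function $f$, a dominating set of $C(C_{3k+1},f)$ of size at most $2(k+1)-1 = 2k+1$, since $\gamma(C_{3k+1}) = \lceil \frac{3k+1}{3}\rceil = k+1$. First I would set up coordinates: write $V(G_1) = \{u_1,\dots,u_{3k+1}\}$ and $V(G_2) = \{v_1,\dots,v_{3k+1}\}$ with the vertices labeled cyclically. The starting observation is that a minimum dominating set of the cycle $C_{3k+1}$ has $k+1$ vertices and necessarily "wastes" one step — concretely, one can dominate $C_{3k+1}$ with $k$ vertices covering $3k$ consecutive vertices and leaving exactly one vertex uncovered, but covering that last vertex forces a $(k+1)$-st vertex. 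So $G_2$ can be dominated by $k+1$ vertices in a way that is very nearly efficient. The idea is to put $k$ carefully placed vertices in $G_2$ dominating all but one vertex of $G_2$ (or all but a short arc), and then use the cross edges, together with a small number of vertices in $G_1$, to finish off both the leftover vertex of $G_2$ and all of $G_1$ more cheaply than the naive $2(k+1)$.

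The key steps, in order, would be: (1) Pick any vertex $u \in V(G_1)$ and let $v = f(u) \in V(G_2)$; we will aim for a dominating set containing $u$. Then $u$ dominates $N_{G_1}[u]$ (three vertices of $G_1$) plus $v$ in $G_2$. (2) Choose $k$ vertices in $G_2$ forming a dominating set of $\langle V(G_2)\setminus N_{G_2}[v]\rangle$ — note $\langle V(G_2)\setminus N_{G_2}[v]\rangle$ is a path on $3k-2$ vertices, which has domination number $k$ — chosen so that together with $v$ itself they dominate all of $G_2$ except possibly a controlled small set; actually, since we already have $v$ dominated by $u$, we need these $k$ vertices to dominate $V(G_2)\setminus\{v\}$, i.e. to dominate a path on $3k$ vertices (after removing $v$), which needs exactly $k$ vertices, placed at positions $2,5,8,\dots$ along that path. (3) Now $G_1$ still needs to be dominated: $u$ covers $3$ vertices of $G_1$, and the $k$ chosen vertices $w_1,\dots,w_k \in V(G_2)$ each cover their single pre-image set $f^{-1}(w_j)$ in $G_1$. (4) The remaining undominated vertices of $G_1$ form a set $R \subseteq V(G_1)$; I need to bound $\gamma(\langle R\rangle_{C(C_n,f)})$, or rather the number of extra vertices needed to dominate $R$, by $k$, so that the total is $1 + k + k = 2k+1 < 2k+2$. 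Since $G_1$ is a cycle on $3k+1$ vertices, $\gamma(G_1) = k+1$; the whole point is that the free domination already coming from $u$ and from the pre-images of $w_1,\dots,w_k$ must shave off at least one vertex from this count.

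The main obstacle — and where the real argument lies — is step (4): controlling how much of $G_1$ is "accidentally" dominated. In the worst case $f$ could be highly non-injective, so that $f^{-1}(w_1),\dots,f^{-1}(w_k)$ cover very little of $G_1$ (they could even all be empty if $w_j \notin \mathrm{Range}(f)$), and then one genuinely has to dominate almost all of $C_{3k+1}$ from within $G_1$, costing $k+1$ and giving a total of $2k+2$. So a single fixed choice will not work, and the proof must instead argue by a counting/averaging or case split: either many $w_j$ lie in $\mathrm{Range}(f)$ and pull in pre-images that help dominate $G_1$, or few do, in which case we are free to *choose* the $w_j$ (and the starting vertex $u$) adaptively. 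The cleanest route is probably: show that we can always select the $k$ codomain vertices so that their pre-image set in $G_1$ is nonempty and, combined with $N_{G_1}[u]$, leaves a subgraph of $G_1$ whose components total at most $3k-1$ vertices with a structure (union of paths) dominable by $k$ vertices. Alternatively, one argues contrapositively from Theorem \ref{LB cycle}-style reasoning: if $\gamma(C(C_{3k+1},f)) = 2k+2$ then every minimum dominating set $D = D_1\cup D_2$ would have to satisfy $|D_1|=|D_2|=k+1$ with $D_i$ a minimum dominating set of $G_i$ and no cross-domination helping at all, which one shows is impossible because any vertex of $D_1$ necessarily dominates its image $f(u)$, so $D_2$ need only dominate $V(G_2)\setminus f(D_1)$ — a strictly easier task — yielding $|D_2| \le k$ and a contradiction. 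I expect the author to use the second, more structural approach, carefully checking the case $\mathrm{Range}(f)$ interacts badly with the cycle's "$+1$" inefficiency; that interaction is the delicate point and likely requires examining how an arc of length $3k+1$ splits under removal of $N[u]$ and the pre-images.
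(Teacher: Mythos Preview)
Your setup in steps (1)--(3) is essentially the paper's construction, and it already works without any further analysis; the ``main obstacle'' you describe in step (4) is a phantom. Once $u\in V(G_1)$ is placed in $D$, it dominates the three consecutive vertices $N_{G_1}[u]$, and the remaining $3k-2$ vertices of $G_1$ form a path $P_{3k-2}$ with $\gamma(P_{3k-2})=\lceil(3k-2)/3\rceil=k$. Thus $k$ additional vertices in $G_1$ always suffice to dominate $R$, \emph{irrespective of the pre-images} $f^{-1}(w_j)$; you never need cross-domination from the $w_j$'s at all. Your worry that ``one genuinely has to dominate almost all of $C_{3k+1}$ from within $G_1$, costing $k{+}1$'' overlooks that $u$ is already one of those $k+1$ vertices, so only $k$ \emph{extra} are needed. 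The total is $1+k+k=2k+1$ directly.

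The paper's proof is precisely this one-line construction, with the roles of $G_1$ and $G_2$ swapped: assuming (after relabeling) that $f(u_1)=v_1$, it takes
\[
D=\{v_1\}\cup\{u_{3j},v_{3j}\mid 1\le j\le k\},
\]
of size $2k+1$. Here $\{v_1,v_3,\dots,v_{3k}\}$ is a minimum dominating set of $G_2$, the set $\{u_3,\dots,u_{3k}\}$ dominates $u_2,\dots,u_{3k+1}$, and the single leftover vertex $u_1$ is dominated across by $v_1$. No averaging, case split, or structural contrapositive is required.
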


\begin{proof}
Without loss of generality, we may assume that $u_1v_1 \in E(C(C_n, f))$. Since $D=
\{v_1\} \cup \{u_{3j}, v_{3j} \ | \ 1 \le j \le k\}$ is a
dominating set of $C(C_{3k+1},f)$ with $|D|=2k+1$ for any function
$f$, $\gamma(C(C_{3k+1},f)) < 2 \gamma(C_{3k+1})$ for $k \in
\mathbb{Z}^+$. \hfill
\end{proof}

\subsection{A characterization of $\gamma(C(C_{3k+2}, f))$}

We begin with the following example showing $\gamma(C(C_5, f)) < 2
\gamma(C_5)$ for any function $f$.\\

\begin{exm} \label{ex1}
For any function $f$, $\gamma(C(C_5,
f)) < 2 \gamma(C_5)$.
\end{exm}

\begin{proof}
Let $G=C_5$, $V(G_1)=\{1, 2, 3, 4 , 5 \}$, and $V(G_2)=\{1', 2',
3', 4', 5'\}$. If $|Range(f)| \le 2$, we can choose a dominating
set consisting of all vertices in the range and, if necessary, an
additional vertex. If $|Range(f)|=3$, then we can choose the range
as a dominating set.\\

So, let $|Range(f)| \ge 4$. Then $f$ is bijective on at least
three vertices in the domain and their image. By the pigeonhole
principle, there exist two adjacent vertices, say $1$ and $2$, on
which $f$ is bijective. Let $f(1)=1'$. Then, by relabeling if
necessary, $f(2)=2'$ or $f(2)=3'$. Suppose $f(2)=3'$. Then
$D=\{1', 3', 4\}$ forms a dominating set, and we are done. Suppose
then $f(2)=2'$. We consider two cases.\\

\textit{Case 1. $|Range(f)|=4$: }By symmetry, $5' \not\in
Range(f)$ is the same as $3' \not\in Range(f)$. So, consider two
distinct cases, $5' \not\in Range(f)$ and $4' \not\in Range(f)$.
If $5' \not\in Range(f)$, then $D=\{1, 3', 4' \}$ forms a
dominating set. If $4' \not\in Range(f)$, then $D=\{1, 3', 5'\}$
forms a dominating set. In either case, we have $\gamma(C(C_5, f))
< 2 \gamma(C_5)$.\\

\textit{Case 2. $f$ is a bijection (permutation): } Recall
$f(1)=1'$ and $f(2)=2'$; there are thus 3!=6 permutations to
consider. Using the standard cycle notation, the permutations are
$(3,4)$, $(3,5)$, $(4,5)$, $(3,4,5)$, $(3,5,4)$, and identity.
However, they induce only four non-isomorphic graphs, since
$(3,4)$ and $(4,5)$ induce isomorphic graphs and $(3,4,5)$ and
$(3,5,4)$ induce isomorphic graphs. If $f$ is either $(3,4)$ or
$(3,4,5)$, then $D=\{2, 3', 5'\}$ is a dominating set. If $f$ is
$(3,5)$, then $D=\{1', 3, 3'\}$ is a dominating set. When $f$ is
the identify function, $D=\{1', 3, 5'\}$ is a dominating set. It is
thus verified that $\gamma(C(C_5, f)) < 2 \gamma(C_5)$. \hfill
\end{proof}

\begin{rem}
Example \ref{ex1} has the following
implication. Given $C(C_{3k+2},f)$ for $k \in \mathbb{Z}^+$,
suppose there exist five consecutive vertices being mapped by $f$
into five consecutive vertices. Then
$\gamma(C(C_{3k+2},f))<2\gamma(C_{3k+2})=2k+2$, and here is a proof. Relabeling if necessary, we may assume that $\{u_1,u_2,u_3,u_4,u_5\}$ are mapped into $\{v_1,v_2,v_3,v_4,v_5\}$; let $S=\{u_i, v_i \mid 1\leq i\leq 5\}$. Then $\langle S \rangle$ in $C(C_{3k+2},f)$ and the additional edge set $\{u_1u_5, v_1v_5\}$ form a graph isomorphic to a $C(C_5,f)$, which has a dominating set $S_0$ with $|S_0| \leq 3$.  In $C(C_{3k+2},f)$, if $S$ is dominated by $S_0$, then $D=S_0 \cup \{u_{3j+1} \mid 2\leq j\leq k\} \cup \{v_{3j+1} \mid 2\leq j \leq k\}$ forms a dominating set for $C(C_{3k+2},f)$ with at most $2k+1$ vertices. If $u_1$ is not dominated by $S_0$ in $C(C_{3k+2},f)$, then it is dominated solely by $u_5$ of $S_0$ in $C(C_5,f)$.  But then $u_6$ is dominated by $u_5$ in $C(C_{3k+2},f)$ and we can replace $\{u_{3j+1} \mid 2 \leq j \leq k\}$ with $\{u_{3j+2} \mid 2 \leq j \leq k\}$ to form $D$.  Similarly, if $u_5$ is not dominated by $S_0$ in $C(C_{3k+2},f)$, then it is dominated solely by $u_1$ of $S_0$ in $C(C_5,f)$.  Then $u_{3k+2}$ is dominated by $u_1$ in $C(C_{3k+2},f)$ and we can replace $\{u_{3j+1} \mid 2 \leq j \leq k\}$ with $\{u_{3j} \mid 2 \leq j \leq k\}$ to form $D$.  The cases where $v_1$ or $v_5$ is not dominated by $S_0$ in $C(C_{3k+2},f)$ can be likewise handled. Thus, if five consecutive vertices are mapped by $f$ into five consecutive vertices, then $\gamma(C(C_{3k+2},f)) \leq 2k+1 < 2k+2 = 2\gamma(C_{3k+2})$.
\end{rem}

\begin{rem}
Unlike $C(C_5,f)$, it is easily checked that $\gamma(C(P_5,
f))=2\gamma(P_5)$ for the function $f$ given in Figure
\ref{path=4}, where $P_5$ is the path on five vertices.
\end{rem}

\begin{figure}[htbp]
\begin{center}
\scalebox{0.45}{\input{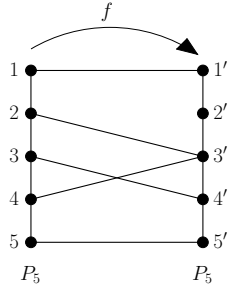}} \caption{An example where
$\gamma(C(P_5, f))=2\gamma(P_5)$}\label{path=4}
\end{center}
\end{figure}

Now we consider the domination number of $C(C_{3k+2}, f)$ for a
non-permutation function $f$, where $k \in \mathbb{Z}^+$.

\begin{theorem}\label{3k+2 non permutation}
Let $f: V(C_{3k+2}) \rightarrow V(C_{3k+2})$ be a function which
is not a permutation. Then $\gamma(C(C_{3k+2},f)) < 2
\gamma(C_{3k+2})=2k+2$.
\end{theorem}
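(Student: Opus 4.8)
The goal is to show that if $f\colon V(C_{3k+2})\to V(C_{3k+2})$ is \emph{not} a permutation, then $\gamma(C(C_{3k+2},f))\le 2k+1$. The natural strategy is to exploit the non-surjectivity of $f$: there is a vertex $v_m\in V(G_2)$ with $v_m\notin Range(f)$. Intuitively, because $v_m$ has no ``cross edge'' coming into it, the vertex $v_m$ behaves almost like a vertex on the cycle $C_{3k+2}$ sitting in $G_2$ with one fewer neighbor, and the cross edges leaving $G_1$ can be used to pick up slack. I would first relabel cyclically so that $v_{3k+2}\notin Range(f)$ (equivalently, the ``missing'' vertex is at a convenient position), and then try to build an explicit dominating set of size $2k+1$.

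\textbf{Key steps.} First I would set up the standard dominating pattern on each copy: a set of the form $\{u_{3j}\mid 1\le j\le k\}$ together with one extra vertex dominates $V(G_1)$ only if we are careful near the ``seam,'' and similarly on $G_2$; the point is that $\gamma(C_{3k+2})=k+1$, so naively we'd need $k+1$ in each copy, totalling $2k+2$. To save one vertex, I would put roughly $k$ vertices in $G_2$ chosen so that they dominate all of $V(G_2)$ \emph{except possibly} the missing-from-range vertex $v_{3k+2}$ and perhaps one of its neighbors — and since $v_{3k+2}$ receives no cross edge, I must make sure $v_{3k+2}$ itself is dominated within $G_2$, but one of its cycle-neighbors (say $v_{3k+1}$ or $v_1$) can be left to be dominated by a cross edge from $G_1$. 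On the $G_1$ side, I would use $k+1$ vertices, but choose one of them to be a preimage of a vertex adjacent to the ``gap'' in $G_2$, so that this $G_1$-vertex does double duty: it dominates part of $G_1$ and also sends a cross edge into $G_2$ to cover the one vertex of $V(G_2)$ that the $G_2$-part missed. Counting: $k$ in $G_2$ plus $k+1$ in $G_1$ gives $2k+1$, as desired. I would present this as an explicit set depending on the position of a preimage of a neighbor of $v_{3k+2}$, and verify domination by checking the $G_1$-part, the $G_2$-part, and the single ``handoff'' vertex.

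\textbf{Case analysis.} The construction above needs a preimage of a specific vertex near the gap. Since $f$ is not onto but could still be nearly onto, I expect to split into cases according to which vertex is missing from the range and where its preimages lie — in particular whether $v_{3k+1}$ (a neighbor of the missing vertex $v_{3k+2}$) has a preimage at a usable location, and if not, shift the whole $G_1$-pattern by one so that some $u_i$ with $f(u_i)\in\{v_{3k+1},v_1\}$ lands in the dominating set. One might also invoke the Remark following Example~\ref{ex1}: if $f$ happens to map five consecutive domain vertices onto five consecutive codomain vertices, that Remark already gives $\gamma\le 2k+1$, so I can assume $f$ does \emph{not} do this, which constrains how $f$ can behave and should make the remaining direct constructions manageable.

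\textbf{Main obstacle.} The hard part will be handling the ``seam'' of the cycle cleanly: because $3k+2\not\equiv 0\pmod 3$, the every-third-vertex pattern does not close up, so there is an unavoidable local irregularity, and I must make the single saved vertex come from exactly this irregular region while simultaneously ensuring the cross edge used for the handoff actually exists (i.e., the relevant vertex of $G_2$ really has a preimage). Keeping the bookkeeping correct across the several sub-cases — missing vertex position, availability of preimages of its neighbors, and whether the $G_1$-pattern must be rotated — is where the proof will need the most care; I would organize it by fixing notation so that WLOG $v_{3k+2}\notin Range(f)$ and then dispatching the two or three sub-cases for the location of a preimage of $v_1$ or $v_{3k+1}$.
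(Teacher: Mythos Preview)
Your approach contains a counting error that undermines the whole plan. With only $k$ vertices in $D_2\subseteq V(G_2)$, each dominating at most $3$ vertices of the $(3k+2)$-cycle, at least \emph{two} vertices of $G_2$ are left undominated, not one. Your single ``handoff'' cross edge from $G_1$ can cover only one of them; the other must also be covered by some $f(u)$ with $u\in D_1$. So you would need to arrange that \emph{both} undominated $G_2$-vertices lie in $Range(f)$, choose preimages $u_a,u_b$ of them, and then complete $\{u_a,u_b\}$ to a set of $k+1$ vertices dominating all of $G_1$. Neither the existence of these preimages (only $v_{3k+2}$ is known to be missing) nor their positions in $G_1$ is controlled, and when $u_a,u_b$ happen to be adjacent, the remaining $k-1$ vertices cannot cover the other $3k-2$ vertices of $G_1$. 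This is not just bookkeeping at the seam; it is a structural gap.

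The paper's argument is in some sense dual to yours and avoids exactly this difficulty. Instead of exploiting non-surjectivity (a vertex with no preimage), it exploits non-injectivity: some $v_1\in V(G_2)$ has $|f^{-1}(v_1)|\ge 2$. It then places the \emph{larger} piece, a full minimum dominating set $D_2$ of size $k+1$, in $G_2$, and uses a pigeonhole count over three overlapping dominating sets of $G_2$ (each containing $v_1$ once or twice) to force $|f^{-1}(D_2)|\ge k+2$. With at least $k+2$ vertices of $G_1$ already dominated via cross edges, a second pigeonhole argument on their residues mod $3$ locates two of them positioned so that the remaining $G_1$-vertices can be dominated by just $k$ vertices. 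The key asymmetry is that $f^{-1}$ can \emph{enlarge} a set (giving bonus dominated vertices in $G_1$), whereas $f$ can only shrink one, which is why putting $k+1$ in $G_2$ and $k$ in $G_1$ succeeds while your $k$-in-$G_2$, $(k+1)$-in-$G_1$ split does not.
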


\begin{proof}
Suppose $f$ is a function from $C_{3k+2}$ to $C_{3k+2}$ and $f$ is
not a permutation.  There must be a vertex $v_1$ in $G_2$ such
that $\deg(v_1) \geq 4$ in $C(C_{3k+2},f)$. Define the sets $V_1 =
\{v_{3i+1} \ | \ 0\leq i \leq k\}$, $V_2 = \{v_{3i+2} \ | \ 0 \leq
i \leq k\}$, and $V_3=\{v_{3i} \ | \ 1 \leq i \leq k\} \cup
\{v_1\}$.  Notice that each of these three sets is a minimum
dominating set of $G_2$ of cardinality $k+1$. Also, notice that
$|f^{-1}(V_1)|+|f^{-1}(V_2)|+|f^{-1}(V_3)|$ counts every vertex in
the pre-image of $V(G_2) \setminus \{v_1\}$ once and every vertex in the
pre-image of $\{v_1\}$ twice, so
$|f^{-1}(V_1)|+|f^{-1}(V_2)|+|f^{-1}(V_3)| \geq 3k+4$.  By the
Pigeonhole Principle, $|f^{-1}(V_i)| \geq
\lceil\frac{3k+4}{3}\rceil = k+2$ for some $i$.  Set $D_2 = V_i$
for this $i$ and notice that $D_2$ is a dominating set of $G_2$
with cardinality $k+1$ and $|f^{-1}(D_2)| \geq k+2$.\\

Without loss of generality, we may assume that $u_1$ is in $f^{-1}(D_2)$. If there
exists $0 \leq i \leq k$ such that $u_{3i+2}$ is also in the
pre-image of $D_2$, then $D_1 = \{u_{3j} \ | \ 1 \leq j \leq i\}
\cup \{u_{3j+1} \ | \ i+1 \leq j \leq k\}$ dominates the remaining
vertices of $G_1$.  Otherwise, there are at least $k+1$ vertices
in $f^{-1}(D_2)\cap\{u_{3j},u_{3j+1} \ | \ 1 \leq j \leq k\}$.  By
the Pigeonhole Principle, there exist two vertices $u_{3j_0}$ and
$u_{3j_0+1}$ in $f^{-1}(D_2)$ which are adjacent in $G_1$. Then $D_1 =
\{u_1\} \cup \{u_{3j+1} \ | \ 1 \leq j \leq j_0-1\} \cup \{u_{3j'}  \mid j_0+1 \le j' \le k\}$ dominates the
remaining vertices of $G_1$.  In either case, $D_1 \cup D_2$ is a
dominating set of $C(C_{3k+2},f)$ with $2k+1$ vertices.~\hfill
\end{proof}

For $G_i \subseteq C(G,f)$ ($i=1,2$), the distance between $x$ and
$y$ in $\langle V(G_i) \rangle$ is denoted by $d_{G_i}(x,y)$.

\begin{theorem}\label{3k+2 distance}
Let $f:V(C_{3k+2}) \rightarrow V(C_{3k+2})$ be a function, where
$k \in \mathbb{Z}^+$. For the cycle $C_{3k+2}$, if there exist two
vertices $x$ and $y$ in $G_1$ such that $d_{G_1}(x,y) \equiv 1\hspace*{-.1in} \pmod 3$ and $d_{G_2}(f(x),f(y)) \not\equiv 1 \hspace*{-.1in} \pmod 3$, 
then $\gamma(C(C_{3k+2}, f))$ $< 2 \gamma(C_{3k+2})$.
\end{theorem}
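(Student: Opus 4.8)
The plan is to exploit the ``shifting'' freedom we get on one of the two cycles when the relevant distances are misaligned modulo 3. Label the vertices of $C_{3k+2}$ cyclically. Since $C_{3k+2}$ has $\gamma = k+1$ and the canonical minimum dominating sets of a cycle are the three ``residue classes'' $\{u_{3i+1}\}$, $\{u_{3i+2}\}$, $\{u_{3i}\}\cup\{$one extra vertex$\}$ type sets, the key idea is that a dominating set of $G_1$ of size $k+1$ necessarily ``skips'' (fails to dominate from within $G_1$, i.e. forces reliance on a pendant edge into $G_2$) in a controlled pattern. First I would set up the following dichotomy: either $\gamma(C(C_{3k+2},f))<2k+2$ is already forced by Theorem~\ref{3k+2 non permutation} (if $f$ is not a permutation, we are done), so we may assume $f$ \emph{is} a permutation; then $d_{G_2}(f(x),f(y))$ is a genuine distance in a cycle, and the hypothesis $d_{G_1}(x,y)\equiv 1\pmod 3$ while $d_{G_2}(f(x),f(y))\not\equiv 1\pmod 3$ gives us two vertices whose separations on the two sides have different residues.

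The main construction: fix $x$ and $y$ as in the hypothesis. On the $G_1$ side, because $d_{G_1}(x,y)\equiv 1\pmod 3$, I can build a dominating set $D_1^\circ$ of the \emph{path} $\langle V(G_1)\setminus\{x,y\}\rangle$ (an arc of length $d_{G_1}(x,y)-1\equiv 0\pmod 3$ between $x$ and $y$, plus the complementary arc) and similarly use $x,y$ themselves to cover their neighborhoods, so that $\{x,y\}$ together with roughly $k-1$ interior vertices dominates all of $G_1$ using only $k+1$ vertices — but crucially with $x$ and $y$ doing ``double duty,'' so that if $x$ and $y$ are \emph{also} dominated from $G_2$ (i.e. $f(x)$ and $f(y)$ lie in a chosen $D_2$), we can \emph{delete} one vertex and get down to $k$ vertices in $G_1$. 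Simultaneously, on the $G_2$ side, I want a minimum dominating set $D_2$ of size $k+1$ containing both $f(x)$ and $f(y)$; this is exactly where $d_{G_2}(f(x),f(y))\not\equiv 1\pmod 3$ is used — if that distance were $\equiv 1\pmod 3$, no size-$(k+1)$ dominating set of $C_{3k+2}$ could contain both, but since it is $\equiv 0$ or $2\pmod 3$, such a $D_2$ exists (place the ``double-covered'' slack on the appropriate arc between $f(x)$ and $f(y)$). Then $D_1\cup D_2$ with $|D_1|=k$, $|D_2|=k+1$ is a dominating set of $C(C_{3k+2},f)$ of size $2k+1<2k+2$.

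More concretely for the counting: with $D_2\ni f(x),f(y)$ a minimum dominating set of $G_2$, every vertex of $G_1$ except those in $f^{-1}(V(G_2)\setminus D_2)$ is already irrelevant to worry about — wait, rather, the vertices of $G_1$ that must be dominated \emph{within} $G_1$ are those not in $f^{-1}(D_2)$, and in particular $x,y\in f^{-1}(D_2)$ are ``free.'' So I need $D_1$ to dominate $V(G_1)\setminus f^{-1}(D_2) \subseteq V(G_1)\setminus\{x,y\}$. The graph $\langle V(G_1)\setminus\{x,y\}\rangle$ is a disjoint union of two paths whose orders sum to $3k$, and the two arcs between $x$ and $y$ have lengths $\equiv 1$ and $\equiv 1\pmod 3$ as internal-vertex counts would be... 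I should check this residue bookkeeping carefully: if $d_{G_1}(x,y)=d$, one arc has $d-1$ internal vertices and the other has $(3k+2)-d-1 = 3k+1-d$ internal vertices; with $d\equiv 1\pmod 3$ these are $\equiv 0$ and $\equiv 0 \pmod 3$, so each path has order $\equiv 0\pmod 3$, and $\gamma(P_{3m}) = m$, giving $|D_1| = (d-1)/3 + (3k+1-d)/3 = k$. Hence $|D_1\cup D_2| = 2k+1$, and we must still verify $D_1\cup D_2$ dominates \emph{all} of $C(C_{3k+2},f)$: vertices of $G_2$ are dominated by $D_2$; vertices of $G_1\setminus\{x,y\}$ are dominated by $D_1$; and $x,y$ are dominated via the functigraph edges to $f(x),f(y)\in D_2$.

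\textbf{Main obstacle.} The delicate point is ensuring the minimum dominating set $D_2$ of $C_{3k+2}$ can be chosen to contain \emph{both} $f(x)$ and $f(y)$ simultaneously; this is a statement about which pairs of vertices of a cycle co-occur in some minimum dominating set. The clean way is: $C_{3k+2}$ on $3k+2$ vertices has minimum dominating sets obtained by deleting one vertex $w$ and optimally dominating the resulting path $P_{3k+1}$ (which needs $k+1$ vertices) — and a minimum dominating set of $P_{3k+1}$ can be taken to contain any prescribed vertex $z$ provided its distances to the two endpoints are suitably controlled mod 3; iterating the choice of $w$, one shows two vertices $a,b$ of $C_{3k+2}$ lie together in a size-$(k+1)$ dominating set iff $d(a,b)\not\equiv 1 \pmod 3$ (when $d(a,b)\equiv 1\pmod 3$, both arcs between them have internal-vertex-count $\equiv 0\pmod 3$, and dominating each arc optimally from the inside costs too much to also cover $a,b$ — a parity/counting contradiction forces total size $\geq k+2$). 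I would isolate this as a short lemma about cycles (``two vertices at distance $\not\equiv 1\pmod 3$ in $C_{3k+2}$ belong to a common minimum dominating set'') and then the theorem follows by the construction above. The non-permutation case is dispatched immediately by Theorem~\ref{3k+2 non permutation}, so the real content is entirely this cycle lemma plus the arc-counting for $D_1$.
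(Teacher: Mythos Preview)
Your construction is correct and matches the paper's approach exactly: pick $D_1$ of size $k$ dominating $V(G_1)\setminus\{x,y\}$ (your arc-residue count is right---both arcs have order $\equiv 0\pmod 3$), and pick a minimum dominating set $D_2$ of $G_2$ of size $k+1$ containing both $f(x)$ and $f(y)$; then $D_1\cup D_2$ has size $2k+1$. The paper does not abstract the $D_2$ step into a lemma but simply writes down the explicit set in each residue case, which amounts to a proof of your ``cycle lemma'' (the \emph{if} direction only; the \emph{only if} direction you sketch is not needed here).

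One remark: your opening detour through Theorem~\ref{3k+2 non permutation} to reduce to permutations is unnecessary and should be dropped. Nothing in your construction uses that $f$ is a permutation---you only need the single values $f(x)$ and $f(y)$, and the edges $xf(x)$, $yf(y)$ exist for any function. (Indeed the case $f(x)=f(y)$, which your permutation reduction would exclude, is handled automatically: $d_{G_2}(f(x),f(y))=0\not\equiv 1$, and any minimum dominating set of $G_2$ through that single vertex works.) The paper's proof accordingly makes no permutation assumption.
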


\begin{proof}
Let $x=1$ and $y=3a+2$ for a nonnegative integer $a$. By
relabeling, if necessary, we may assume that $f(x)=1'$. Note that
$D_1=(\cup_{i=1}^{a}\{3i\}) \cup (\cup_{i=a+1}^{k}\{3i+1\})$
dominates vertices in $V(G_1) \setminus \{x, y\}$. If $f(x)=1'=f(y)$, let
$D_2$ be any minimum dominating set of $G_2$ containing $1'$. Then
$D=D_1 \cup D_2$ is a dominating set of $C(C_{3k+2},f)$ with $|D|
\le 2k+1$. Thus, we assume that $f(x) \neq f(y)$. Since
$d_{G_2}(f(x),f(y)) \not\equiv 1 \pmod 3$, $f(y)=(3\ell)'$ or
$f(y)=(3\ell+1)'$ for some $\ell$ ($1 \le \ell \le k$). First,
consider when $\ell >1$. If $f(y)=(3 \ell)'$, let
$D_2=(\cup_{i=1}^{\ell-1} \{(3i+1)'\}) \cup (\cup_{i= \ell+1}^{k}
\{(3i)'\}) \cup \{1', (3 \ell)'\}$; and if $f(y)=(3 \ell+1)'$, let
$D_2=(\cup_{i=1}^{\ell-1} \{(3i+1)'\}) \cup (\cup_{i= \ell+1}^{k}
\{(3i+1)'\}) \cup \{1', (3 \ell+1)'\}$. Second, consider when
$\ell=1$. If $f(y)=(3 \ell)'$, let $D_2=(\cup_{i=1}^k\{(3i)'\})
\cup \{1'\}$; if $f(y)=(3 \ell+1)'$, let
$D_2=(\cup_{i=1}^{k}\{(3i+1)'\}) \cup \{1'\}$. Notice that $D_2$
dominates $V(G_2) \cup \{x,y\}$ in each case. Thus $D=D_1 \cup
D_2$ is a dominating set of $C(C_{3k+2}, f)$ with
$|D|=|D_1|+|D_2| =k+k+1=2k+1 < 2 \gamma(C_{3k+2}) = 2k+2$.~\hfill
\end{proof}

Next we consider $C(C_{3k+2}, f)$ for a permutation $f$.

\begin{lemma}\label{Ui}
Let $f$ be a monotone increasing function from $S=\{1,2,\ldots,
n\}$ to $\mathbb{Z}$ such that $f(1)=1$. If $|j-i| \equiv 1 \pmod
3$ implies $|f(j)-f(i)| \equiv 1 \pmod 3$ for any $i,j \in S$,
then $f(i)\equiv i \pmod 3$.
\end{lemma}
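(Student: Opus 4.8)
The plan is to extract from the hypothesis only its instances for \emph{consecutive} integers and then telescope from the normalization $f(1)=1$; the apparently stronger assumption about all pairs $i,j$ with $|j-i|\equiv 1\pmod 3$ turns out to be unnecessary for this lemma.

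First I would take $f(1)=1\equiv 1\pmod 3$ as the base case. For the inductive step, fix $i$ with $1\le i\le n-1$. Since $|(i+1)-i|=1\equiv 1\pmod 3$, the hypothesis yields $|f(i+1)-f(i)|\equiv 1\pmod 3$; in particular $f(i+1)\neq f(i)$. As $f$ is monotone increasing, $f(i+1)\ge f(i)$, so in fact $f(i+1)>f(i)$ and therefore $f(i+1)-f(i)=|f(i+1)-f(i)|\equiv 1\pmod 3$. Consequently, assuming $f(i)\equiv i\pmod 3$, we get $f(i+1)=f(i)+\bigl(f(i+1)-f(i)\bigr)\equiv i+1\pmod 3$. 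By induction on $i$ (equivalently, by the telescoping identity $f(i)=f(1)+\sum_{k=1}^{i-1}\bigl(f(k+1)-f(k)\bigr)\equiv 1+(i-1)\equiv i\pmod 3$), the claim $f(i)\equiv i\pmod 3$ holds for every $i\in S$.

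I do not expect a genuine obstacle here. The only point that needs a word of care is that monotonicity is exactly what is used to upgrade the congruence $|f(i+1)-f(i)|\equiv 1\pmod 3$ to $f(i+1)-f(i)\equiv 1\pmod 3$; without it the step would fail, since a decrease of $2$ would also be compatible with the hypothesis. As a byproduct, the conclusion immediately gives $|f(j)-f(i)|\equiv|j-i|\pmod 3$ for \emph{all} $i,j\in S$, which is presumably the form in which the lemma is invoked for permutations of $C_{3k+2}$ in what follows.
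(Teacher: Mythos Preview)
Your proof is correct and is essentially the paper's own argument, spelled out in full: the paper simply observes that monotonicity together with the hypothesis gives $f(i+1)-f(i)\equiv 1\pmod 3$ for each $i$, and then invokes induction. One tiny slip in your closing commentary: the alternative compatible with the hypothesis that monotonicity rules out is a decrease congruent to $1\pmod 3$ (e.g.\ $f(i+1)-f(i)=-1$), not ``a decrease of $2$''; this does not affect the proof itself.
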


\begin{proof}
The monotonicity of $f$ -- and the rest of the hypotheses --
provides that $f(i+1)-f(i)\equiv 1 \pmod 3$, for each $1\leq i <
n$; apply it inductively to reach the conclusion. \hfill
\end{proof}

\begin{theorem}\label{3k+2 permutation}
Let $G=C_{3k+2}$ for a positive integer $k$, and let $f: V(G_1)
\rightarrow V(G_2)$ be a permutation, where the vertices in both
the domain and codomain are labeled 1 through $3k+2$. Assume
\begin{equation}\label{*}
d_{G_2}(f(x), f(y)) \equiv 1 \pmod 3 \mbox{ whenever }d_{G_1}(x,y)
\equiv 1 \pmod 3.
\end{equation}
If $f(1)=1$, then $C(C_{3k+2},f) \cong C_{3k+2} \times K_2$.
\end{theorem}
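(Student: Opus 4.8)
The plan is to show that condition \eqref{*} together with $f(1)=1$ forces $f$ to be essentially the identity up to the orientation of the codomain cycle, so that the three ``identity/constant-prism'' edges are put into canonical position. The first step is to work on the real line rather than the cycle: cut both $C_{3k+2}$'s at a convenient place so that the cyclic distance condition can be transferred to an honest statement about a monotone function on $\{1,2,\dots,3k+2\}$, putting us in position to invoke Lemma \ref{Ui}. Concretely, since $f$ is a permutation with $f(1)=1$, I would consider the neighbors of $1$ in $G_1$, namely $2$ and $3k+2$; by \eqref{*} (applied with $d_{G_1}=1$, since $3k+2 \equiv 2$, wait --- one must be careful: $d_{G_1}(1,2)=1\equiv 1$, and $d_{G_1}(1,3k+2)=1\equiv 1$), the images $f(2)$ and $f(3k+2)$ must both be at cyclic distance $\equiv 1 \pmod 3$ from $1'$, i.e.\ each lies in $\{2', (3k+2)'\}$ modulo the distance-$1$ residue class. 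After possibly relabeling the codomain by the reflection $i' \mapsto (3k+4-i)'$ (an automorphism of $G_2$ fixing $1'$), I may assume $f(2)=2'$.

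Next I would bootstrap this: having pinned $f(1)=1'$ and $f(2)=2'$, I want to argue that $f$ is in fact the identity. The cleanest route is to transfer to the line. Relabel both cycles cyclically as $0,1,\dots,3k+1$ with the vertex formerly called $i$ now called $i-1$, so $f(0)=0$ and $f(1)=1$. For vertices $i,j$ with $|i-j|$ small, $d_{G_1}(i,j)=|i-j|$, and likewise in $G_2$ unless the pair ``wraps around.'' The key observation is that \eqref{*} forces, for every consecutive pair $i,i+1$, that $f(i+1)$ and $f(i)$ are at cyclic distance $\equiv 1 \pmod 3$; since $f$ is a bijection one shows inductively (as in Lemma \ref{Ui}, after checking that no wrap-around can occur before the induction closes up) that $f$ is monotone and $f(i) \equiv i \pmod 3$. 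Combined with bijectivity on a set of size $3k+2 = 3k+2$ and the anchoring $f(0)=0$, $f(1)=1$, this forces $f(i)=i$ for all $i$, i.e.\ $f$ is the identity after the relabeling/reflection. Hence $C(C_{3k+2},f) \cong C(C_{3k+2},id) = C_{3k+2}\times K_2$.

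The main obstacle I anticipate is the wrap-around bookkeeping: Lemma \ref{Ui} is stated for a monotone function into $\mathbb{Z}$, but here $f$ maps into a cycle, so before applying it I must rule out the possibility that $f$ ``jumps'' across the cut, which would break monotonicity. The way I would handle this is to argue that the residue-class constraint $f(i)\equiv i \pmod 3$ (which one can extract directly from \eqref{*} applied to pairs at distance $1$, $2$, $4$, \dots\ without assuming monotonicity) together with the fact that the pre-images of each of the three residue classes have sizes $k+1, k+1, k$ (the same as in $G_1$) forces $f$ to be a bijection between the residue classes; then a short parity/counting argument, or an appeal to the structure of how three interleaved arithmetic progressions can map to themselves under a distance-$\equiv1$-preserving map, pins down that $f$ must respect the cyclic order, after which Lemma \ref{Ui} applies on the straightened-out cycle. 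A secondary, purely cosmetic point is justifying the initial reflection that lets us assume $f(2)=2'$ rather than $f(2)=(3k+2)'$; this is immediate since reflection is a graph automorphism of $G_2$ and does not change the isomorphism type of $C(C_{3k+2},f)$.
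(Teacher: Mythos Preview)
Your overall strategy---reduce to showing that $f$ is the identity after relabeling, with Lemma~\ref{Ui} as the engine---matches the paper's. But there is a concrete error in your normalization step and the central technical work is missing.

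First, the normalization. You claim that from $d_{G_2}(1',f(2))\equiv 1\pmod 3$ you get $f(2)\in\{2',(3k+2)'\}$, and that a reflection then pins $f(2)=2'$. This is false: since $|x-y|\equiv 1\pmod 3$ iff $d_{C_{3k+2}}(x,y)\equiv 1\pmod 3$, and since $f(2)>1$, the condition gives only $f(2)\equiv 2\pmod 3$, i.e.\ $f(2)\in\{2',5',8',\dots,(3k+2)'\}$. The reflection fixing $1'$ permutes this set but does not collapse it to $\{2'\}$. The paper's normalization is different and weaker: by cyclically relabeling it arranges only that $f(1)<f(2)<f(3)$ with $f(1)=1$, $f(2)=3y_0+2$, $f(3)=3z_0+3$.

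Second, and more seriously, Lemma~\ref{Ui} \emph{assumes} monotonicity; it does not produce it. You correctly flag this (``main obstacle'') but your proposed fix---extract $f(i)\equiv i\pmod 3$ directly from~\eqref{*} and then finish by a counting argument---does not go through as stated. From $f(1)=1$ one gets $f(\{i:i\equiv 2\})\subseteq\{j:j\equiv 2\}$, but for the classes $\{i\equiv 0\}$ and $\{i\equiv 1\}$ the constraint $|f(x)-f(y)|\equiv 1$ bifurcates according to whether $f(x)>f(y)$ or $f(x)<f(y)$, so the residue of $f(x)$ is not determined without order information. This is exactly the coupling between residue and monotonicity that the paper untangles by a careful two-step induction: Step~I extends $f(1)<f(2)<f(3)$ to $f(1)<\cdots<f(5)$ by case analysis, and Step~II extends $F(3m+2)$ to $F(3m+5)$ using~\eqref{*} applied to several pairs simultaneously (in particular comparing $f(3m+5)$ against $f(1)$ and $f(3)$ to force the correct residue and sign at each stage). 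That inductive case analysis is where the actual content of the proof lives, and it is absent from your proposal.
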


\begin{proof}
Denote by $F(n)$ the sequence of inequalities
$f(1)<f(2)<\cdots<f(n-1)<f(n)$. By cyclically relabeling
(equivalent to going to an isomorphic graph) if necessary, we may
assume $F(3)$; now the graph $C(C_{3k+2},f)$, along with the
labeling of all its vertices, is fixed. Without loss of generality, let $f(1)=1$,
$f(2)=3y_0+2$, and $f(3)=3z_0+3$ for $0 \le y_0 \le z_0 < k$.
Notice $|x-y| \equiv 1 \pmod 3$ if and only if $d_G(x,y) \equiv 1
\pmod 3$ for $G=C_{3k+2}$; we will use $|\cdot|$ in distance
considerations. We will prove that $f$ is monotone increasing on
vertices in $G_1$ (and hence $f$ is the identity function) in two
steps: Step I is the extension to $F(5)$ from $F(3)$. Step II is
the extension to $F(3(m+1)+2)$ from $F(3m+2)$ if $1\leq m\leq
k-1$.\\

Step I. Suppose for the sake of contradiction that $F(5)$ is
false. We first prove $F(4)$ and then $F(5)$.\\

Suppose $f(4) < f(3)$. This means, by condition~(\ref{*}), that
$f(4) \equiv 2 \pmod 3$. If $f(5) < f(4)$, then condition
(\ref{*}) implies $f(5) \equiv 1 \pmod 3$. If $f(5)> f(4)$, then
condition~(\ref{*}) implies $f(5) \equiv 0 \pmod 3$. Now notice
$|1-5|\equiv 1 \pmod 3$. If $f(5)<f(4)$, then $|f(1)-
f(5)|=f(5)-f(1) \equiv 0 \pmod 3$; if $f(5)>f(4)$, then $|f(1)-
f(5)|=f(5)-f(1) \equiv 2 \pmod 3$. In either case,
condition~(\ref{*}) is violated. Thus $f(3)<f(4)$, and $f(4)
\equiv 1 \pmod 3$.

\vspace{.1in}

Suppose $f(5) < f(4)$. This means, by condition~(\ref{*}), that
$f(5) \equiv 0 \pmod 3$. Then $|f(1)- f(5)|=f(5)-f(1) \equiv 2
\pmod 3$, which contradicts condition~(\ref{*}) since, again,
$|1-5|\equiv 1 \pmod 3$. Thus we have $f(4)<f(5)$, and $f(5)
\equiv 2 \pmod 3$.\\

Step II. Suppose $F(3m+2)$ for $1\leq m\leq k-1$; we will show
$F(3(m+1)+2)$. Observe that 
\begin{equation}\label{maltese}
f(3m+5)-f(1) \equiv 1 \pmod 3 \mbox{  implies  } f(3m+5) \equiv 2 \pmod 3 .
\end{equation}

First, assume $f(3m+3)<f(3m+2)$: This means, by
condition~(\ref{*}) and Lemma \ref{Ui}, that $f(3m+3) \equiv 1
\pmod 3$. Assuming $f(3m+4)>f(3m+3)$, then $f(3m+4)\equiv 2 \pmod
3$; which in turn implies that $f(3m+5)\equiv 0 \mbox{ or } 1
\pmod 3$, either way a contradiction to (\ref{maltese}). Assuming
$f(3m+4)<f(3m+3)$, then $f(3m+4)\equiv 0 \pmod 3$; however,
comparing with $f(3)$, $f(3m+4)\equiv 1 \mbox{ or } 2 \pmod 3$,
either way a contradiction again. We have thus shown that
$f(3m+3)>f(3m+2)$, which means $f(3m+3) \equiv 0 \pmod 3$.

\vspace{.1in}

Second, assume $f(3m+4)<f(3m+3)$: This means, by
condition~(\ref{*}) and Lemma \ref{Ui}, that $f(3m+4) \equiv 2
\pmod 3$. Assuming $f(3m+5)>f(3m+4)$, we have $f(3m+5)\equiv 0
\pmod 3$. Assuming $f(3m+5)<f(3m+4)$, we have $f(3m+5)\equiv 1
\pmod 3$. Either way we reach a contradiction to (\ref{maltese}). We
have thus shown that $f(3m+4)>f(3m+3)$, which means $f(3m+4)
\equiv 1 \pmod 3$.

\vspace{.1in}

Finally, assume $f(3m+5)<f(3m+4)$: This means, by
condition~(\ref{*}) and Lemma \ref{Ui}, that $f(3m+5) \equiv 0
\pmod 3$, which is a contradiction to (\ref{maltese}). Thus, $f(3m+5)>
f(3m+4)$ and $f(3m+5) \equiv 2 \pmod 3$. \hfill
\end{proof}

\begin{theorem}
For any function $f$, $\gamma(C(C_{3k+2}, f)) < 2
\gamma(C_{3k+2})$, where $k \in \mathbb{Z}^+$.
\end{theorem}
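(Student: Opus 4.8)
The plan is to combine the results already proved in this subsection. By Theorem~\ref{3k+2 non permutation}, the inequality $\gamma(C(C_{3k+2},f)) < 2\gamma(C_{3k+2})$ holds whenever $f$ is not a permutation, so it remains to treat the case where $f$ is a permutation. So assume from now on that $f$ is a permutation of the vertices of $C_{3k+2}$.

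First I would dispose of the ``generic'' permutations using Theorem~\ref{3k+2 distance}: if there exist vertices $x,y \in V(G_1)$ with $d_{G_1}(x,y)\equiv 1\pmod 3$ but $d_{G_2}(f(x),f(y))\not\equiv 1\pmod 3$, then Theorem~\ref{3k+2 distance} immediately gives $\gamma(C(C_{3k+2},f)) < 2\gamma(C_{3k+2})$. Hence we may assume that condition~(\ref{*}) holds, i.e.\ $d_{G_2}(f(x),f(y))\equiv 1\pmod 3$ whenever $d_{G_1}(x,y)\equiv 1\pmod 3$. Now by cyclically relabeling $G_1$ and $G_2$ (passing to an isomorphic functigraph, which does not change the domination number) we may arrange that $f(1)=1$ and that $f$ is monotone increasing on $\{1,2,3\}$; Theorem~\ref{3k+2 permutation} then forces $C(C_{3k+2},f) \cong C_{3k+2}\times K_2 = C(C_{3k+2},id)$.

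It therefore suffices to verify the claim for the single graph $C(C_{3k+2},id)$. Here I would invoke Theorem~\ref{C_n identity}: with $n = 3k+2$ we have $\gamma(C(C_n,id)) = \lceil n/2\rceil$ if $n\not\equiv 2\pmod 4$ and $\gamma(C(C_n,id)) = n/2 + 1$ if $n\equiv 2\pmod 4$. In the worst case $n\equiv 2\pmod 4$ (so $n$ is even), this equals $n/2+1 = (3k+2)/2 + 1 = (3k+4)/2$, while $2\gamma(C_{3k+2}) = 2(k+1) = 2k+2 = (4k+4)/2$; since $k\ge 1$, $(3k+4)/2 < (4k+4)/2$, so the strict inequality holds. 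When $n\not\equiv 2\pmod 4$, $\gamma(C(C_n,id)) = \lceil(3k+2)/2\rceil \le (3k+3)/2 < 2k+2$ for all $k\ge 1$, so again the bound holds.

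The only real subtlety is the relabeling reduction in the second paragraph: one must check that cyclic relabelings of the domain and codomain, together with the reflection/reversal symmetries of the cycle, genuinely normalize an arbitrary permutation satisfying~(\ref{*}) to one with $f(1)=1$ and $F(3)$ (monotone increasing on the first three vertices), so that Theorem~\ref{3k+2 permutation} applies — but this is exactly the setup already used in the proof of that theorem, so no new work is needed. I expect the argument to be short, essentially a bookkeeping combination of Theorems~\ref{3k+2 non permutation}, \ref{3k+2 distance}, \ref{3k+2 permutation}, and \ref{C_n identity}. \hfill$\square$
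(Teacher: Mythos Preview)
Your proposal is correct and follows exactly the paper's approach: the paper's proof is the single sentence ``Combine Theorem~\ref{C_n identity}, Theorem~\ref{3k+2 non permutation}, Theorem~\ref{3k+2 distance}, and Theorem~\ref{3k+2 permutation},'' and you have simply spelled out how these four results fit together, including the numerical check for the identity case and the (straightforward) relabeling needed to arrange $f(1)=1$ before invoking Theorem~\ref{3k+2 permutation}.
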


\begin{proof}
Combine Theorem \ref{C_n identity}, Theorem \ref{3k+2 non
permutation}, Theorem \ref{3k+2 distance}, and Theorem \ref{3k+2
permutation}.~\hfill
\end{proof}

\subsection{Towards a characterization of $\gamma(C(C_{3k}, f))$}

\begin{dnt}
Let $f$ be a function from $S=\{1,2,\ldots,3k\}$ to itself. We say
$f$ is a \textbf{three-translate} if $f(x+3i)=f(x)+3i$ for
$x\in\{1,2,3\}$ and $i\in\{0,1,\ldots,k-1\}$. Let
$\widetilde{f}=f|_{\{1,2,3\}}$.
\end{dnt}

\noindent \textbf{Notation}. Denote by $\widetilde{f}=(a_1, a_2,
a_3)$ the function such that $\widetilde{f}(1)=a_1$,
$\widetilde{f}(2)=a_2$, and $\widetilde{f}(3)=a_3$. We use
$C(C_{3k},f)$ and $C(C_{3k}, \widetilde{f})$ interchangeably when
$f$
is a three-translate.\\

First consider $C(C_{3k}, f)$ for a three-translate permutation
$f$.

\begin{figure}[htbp]
\begin{center}
\scalebox{0.43}{\input{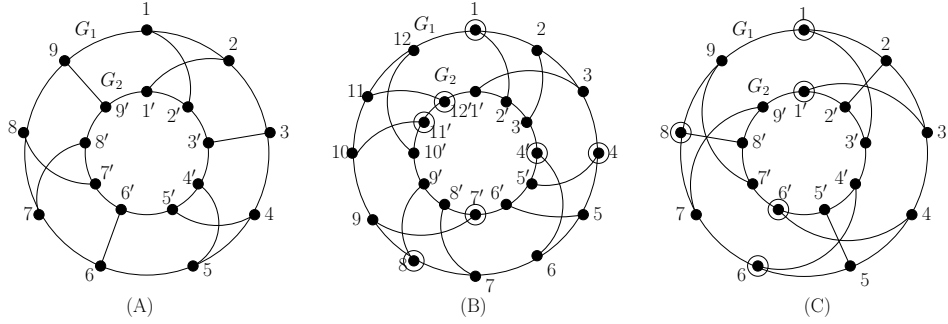}} \caption{Examples of
$C(C_{3k},f)$ for three-translate permutations $f$ when $k \ge 3$}
\label{3TP}
\end{center}
\end{figure}

\begin{theorem}\label{three-translate permutation}
Let $f$ be a three-translate permutation and let $k \ge 4$. Then
$\gamma(C(C_{3k}, f))=2k=2 \gamma(C_{3k})$ if and only if
$\widetilde{f}$ is $(2,1,3)$ or $(1,3,2)$.
\end{theorem}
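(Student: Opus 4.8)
The plan is to prove the two implications separately; before either I record a small reduction. Rotating the labels of $G_1$ by one position gives a graph isomorphism $C(C_{3k},(2,1,3))\cong C(C_{3k},(1,3,2))$: conjugating the three-translate $(2,1,3)$ by the rotation $x\mapsto x+1$ produces exactly $(1,3,2)$ (and rotations preserve the three-translate property). So for the forward implication it suffices to treat the single functigraph $H:=C(C_{3k},(2,1,3))$.

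\textbf{The ``if'' direction.} The bound $\gamma(H)\le 2k$ is immediate from Proposition \ref{dom_bound}, so the content is $\gamma(H)\ge 2k$. First I would describe the structure of $H$. For each $i\in\{0,\dots,k-1\}$ the four vertices $3i{+}1,\,3i{+}2,\,(3i{+}1)',\,(3i{+}2)'$ induce a $4$-cycle $Q_i$ (two cross edges together with the two ``horizontal'' edges), while $3i{+}3$ and $(3i{+}3)'$ form a cross edge and are attached to $Q_i$ and to $Q_{i+1}$. Collecting these six vertices into a block $B_i$, the graph $H$ becomes a cyclic chain $B_0,B_1,\dots,B_{k-1}$ in which only two edges cross each interface. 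The key local fact is that $3i{+}2$ and $(3i{+}2)'$ have all their neighbours inside $B_i$ and $N[3i{+}2]\cap N[(3i{+}2)']=\{3i{+}1,(3i{+}1)'\}$. Hence every dominating set $D$ meets every $B_i$; and if $|D\cap B_i|=1$, the unique vertex of $D$ in $B_i$ is $3i{+}1$ or $(3i{+}1)'$, and forcing the remaining four vertices of $B_i$ to be dominated then puts both $3(i{+}1){+}1$ and $(3(i{+}1){+}1)'$ into $D$.

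Next comes the main step, a propagation/termination count. Using the same neighbourhood bookkeeping, one checks that if $D\cap B_j=\{3j{+}1,(3j{+}1)'\}$ exactly, then the two connector vertices of $B_j$ can only be dominated from $B_{j+1}$, which again forces $\{3(j{+}1){+}1,(3(j{+}1){+}1)'\}\subseteq D$. Therefore every block with $|D\cap B_i|=1$ starts a forced chain of blocks each meeting $D$ in at least $2$ vertices; this chain cannot return to the block it started from, so it must end at a block meeting $D$ in at least $3$ vertices, and distinct starting blocks produce distinct terminating blocks. Letting $t$ be the number of blocks with $|D\cap B_i|=1$, these $t$ blocks are independent on the cycle, they and their $t$ distinct terminators are disjoint, and summing gives $|D|\ge t\cdot 1+t\cdot 3+(k-2t)\cdot 2=2k$. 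This global chain argument is, I expect, the crux: the naive count $|D|\ge |V(H)|/4=3k/2$ is far too weak, and one genuinely needs the forced structure to propagate around the whole necklace.

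\textbf{The ``only if'' direction.} I would argue the contrapositive, exhibiting a dominating set of size $2k-1$ whenever $f$ is a three-translate permutation with $\widetilde f\notin\{(2,1,3),(1,3,2)\}$. Up to the obvious isomorphisms of functigraphs (rotating $G_1$ and $G_2$ independently, and reflections), such an $f$ is determined by the residue pattern of $(\widetilde f(1),\widetilde f(2),\widetilde f(3))$ modulo $3$ — a permutation of the three residues — together with three block-offsets, reducing matters to a few normal forms: $(a)$ the residue pattern is trivial and the offsets are all equal, in which case $C(C_{3k},f)\cong C_{3k}\times K_2$ and Theorem \ref{C_n identity} gives $\gamma<2k$ for all $k\ge 3$; $(b)$ the residue pattern is trivial but the offsets differ (a $3$-periodically twisted prism); $(c)$ the residue pattern is a $3$-cycle; $(d)$ the residue pattern is the ``non-adjacent'' transposition (the $(3,2,1)$-type). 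In each of $(b)$–$(d)$ one writes down an explicit dominating set — typically one of the three efficient dominating sets of $G_2$ together with a suitable $(k-1)$-element subset of $G_1$, with one position shifted so that a single cross edge covers two otherwise separate demands — and checks it has $2k-1$ vertices. This is routine bookkeeping; the hypothesis $k\ge 4$ enters only here, since for small $k$ (e.g.\ $C_3,C_6$) there are extra doublers, cf.\ Corollary \ref{id} and Lemma \ref{cycle 3}.
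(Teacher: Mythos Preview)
Your proposal is correct and follows the same route as the paper: for the ``if'' direction the paper uses the identical block partition (its $S_i$ is your $B_i$), the same observation that a block meeting $D$ only once must do so at $u_1$ or $v_1$ and then forces $\{u_4,v_4\}\subseteq D$, and the same forward propagation/restart argument --- your terminal count $t+3t+2(k-2t)=2k$ is simply a tidier packaging of the paper's case analysis at the final block. For the ``only if'' direction the paper likewise disposes of the remaining permutation types by explicit small dominating sets (identity via Corollary~\ref{id}; $(2,3,1)\cong(3,1,2)$ and $(3,2,1)$ via examples at $k=4$ and $k=3$ that extend), matching your cases (a), (c), (d); your case (b) does not arise, since in the paper $\widetilde f$ is by convention a permutation of $\{1,2,3\}$, so all block-offsets are zero.
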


\begin{proof}
Notice that $\widetilde{f}$ is one of the six permutations: identity,
$(1,3,2)$, $(2,1,3)$, $(2,3,1)$, $(3,1,2)$, and $(3,2,1)$. First,
the identity does not attain the upper bound for $k \ge 3$ by
Corollary \ref{id}. Second, the permutations $(2,3,1)$ and
$(3,1,2)$ are inverses of each other and induce isomorphic graphs in
$C(C_{3k},f)$; they do not attain the upper bound for $k \ge 4$:
$D=\{1,4,8,4',7',11',12'\}$ is a dominating set of $C(C_{12},f)$
where $\widetilde{f}=(2,3,1)$ (see (B) of Figure \ref{3TP}).
Third, the transposition $(3,2,1)$ fails to attain the upper bound
for $k \ge 3$: $D=\{1,6,8,1',6'\}$ is a dominating set of $C(C_9,
f)$ (see (C) of Figure \ref{3TP}). When $\widetilde{f}$ is
$(2,3,1)$ or $(3,1,2)$ or $(3,2,1)$, one can readily see how to
extend a dominating set from $k$ to $k+1$. Lastly, the
transpositions $(1,3,2)$ and $(2,1,3)$ induce isomorphic graphs in
$C(C_{3k}, f)$.\\

Claim: If $\widetilde{f}$ is $(1,3,2)$ or $(2,1,3)$, then
$\gamma(C(C_{3k}, f))=2k=2 \gamma(C_{3k})$ for each $k \ge 3$.\\

For definiteness, let $\widetilde{f}=(2,1,3)$ (see
(A) of Figure \ref{3TP}). For the sake of contradiction, assume
$\gamma(C(C_{3k},f)) < 2\gamma(C_{3k})=2k$ and consider a minimum
dominating set $D$ for $C(C_{3k},f)$.  We can partition the
vertices into $k$ sets $S_i=\{u_{3i-2}, u_{3i-1}, u_{3i},
v_{3i-2}, v_{3i-1}, v_{3i}\}$ for $1 \leq i \leq k$.  By the
Pigeonhole Principle, $|D \cap S_i| \leq 1$ for some $i$. Without loss of generality, we
assume that $|D \cap S_1| \leq 1$. Since neither $u_2$ nor $v_2$
has a neighbor that is not in $S_1$, $D \cap S_1$ must be either
$\{u_1\}$ or $\{v_1\}$ -- in order for both $u_2$ and $v_2$ to be
dominated by only one vertex.

\vspace{.1in}

Notice that $u_3$ and $v_3$ are dominated neither by $u_1$ nor by
$v_1$, so $D \cap S_2$ must contain both $u_4$ and $v_4$. But then
either $|D \cap S_2| \geq 3$ or $u_6$ and $v_6$ are not dominated
by any vertex in $D \cap S_2$: if $|D \cap S_2| \ge 3$, we start
the argument anew at $S_3$; thus we may, without loss of generality, assume $u_6$ and
$v_6$ are not dominated by any vertex in $D \cap S_2$ and $|D \cap
S_2| =2$. This forces $u_7$ and $v_7$ to be in $D$, but this still
leaves $u_9$ and $v_9$ un-dominated by any vertex in
$\cup_{i=1}^{3} (D \cap S_i)$. Again, if $|D \cap S_3| \ge 3$, we
start the argument anew at $S_4$. Thus, we may assume $u_9$ and
$v_9$ are not dominated by any vertex in $\cup_{i=1}^{3}(D \cap
S_i)$.

\vspace{.1in}

This pattern (allowing restarts) is forced to persist if
$\gamma(C(C_{3k}, f)) <  2k$. Now, one of two situations prevails
for $U_k$: First, the argument begins anew at $U_k$. In this case,
even if $u_{3k-2}$ and $v_{3k-2}$ are dominated by vertices
outside $S_k$, one still has $|D \cap S_k| \ge 2$, and hence $|D|
\ge 2k$. Second, the vertices $u_{3k-2}$ and $v_{3k-2}$ are
already in $D$. And if $|D \cap S_k|=2$, then either $u_{3k}$ or
$v_{3k}$ is left un-dominated. Therefore, $|D \cap S_k| \ge 3$;
this means $|D| \ge 2k$, contradicting the original
hypothesis.~\hfill
\end{proof}

\begin{rem}
One can readily check that $\gamma(C(C_{12k}, (2,3,1))) =\gamma(C(C_{12k}, (3,1,2))) \le 7k$
and $\gamma(C(C_{9k}, (3,2,1))) \le 5k$ for $k \in \mathbb{Z}^+$. 
\end{rem}

Next we consider $C(C_{3k}, f)$ for a non-permutation
three-translate $f$. Note that constant three-translates (i.e., $\tilde{f} =$ constant) never achieve the upper bound.\\

\begin{rem}\label{nonperm remark}
For $k \ge 3$, it is easy to check that there are five
non-isomorphic and non-constant three-translates which are not permutations. That is, (i) $C(C_{3k},(1,1,2)) \cong C(C_{3k},(1,1,3)) \cong C(C_{3k},(1,2,2)) \cong C(C_{3k},(2,2,3)) \cong C(C_{3k},(1,3,3)) \cong C(C_{3k},(2,3,3))$; 
(ii) $C(C_{3k},(1,2,1)) \!\cong\!C(C_{3k},(2,1,2)) \!\cong\! C(C_{3k},(2,3,2))
\!\cong\! C(C_{3k},(3,2,3))$; (iii) $\!C(C_{3k},(2,1,1))\!$ 

$\!\cong\! C(C_{3k},(2,2,1)) \!\cong C(C_{3k},(3,2,2)) \!\cong\!
C(C_{3k},(3,3,2))$; (iv) $C(C_{3k},(1,3,1)) \!\cong\! C(C_{3k},(3,1,3))$; 
(v) $C(C_{3k},(3,1,1)) \cong C(C_{3k},(3,3,1))$.
\end{rem}

\begin{figure}[htbp]
\begin{center}
\scalebox{0.5}{\input{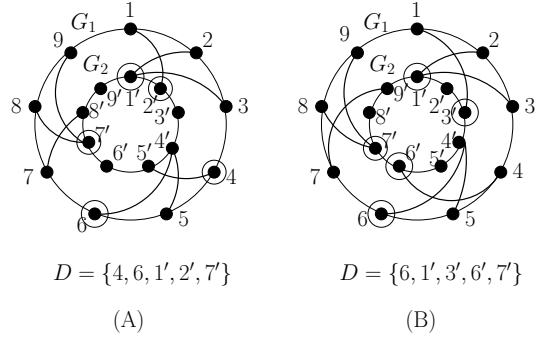}} \caption{Examples of
$C(C_{3k},f)$ such that $\gamma(C(C_{3k}, f))<2\gamma(C_{3k})$ for
non-permutation three-translates $f$ and for $k \ge
3$}\label{3T3K}
\end{center}
\end{figure}

\begin{theorem}\label{three-translate non permutation}
Let $f$ be a three-translate which is not a permutation and let $k
\ge 3$. Then $\gamma(C(C_{3k}, \widetilde{f}))=2k=2
\gamma(C_{3k})$ if and only if $C(C_{3k},\widetilde{f}) \cong
C(C_{3k},\!(1,1,2))$ or $C(C_{3k},\widetilde{f}) \cong
C(C_{3k},\!(1,2,1))$ or $C(C_{3k}, \widetilde{f}) \cong C(C_{3k},\!(1,3,1))$.
\end{theorem}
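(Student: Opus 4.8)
The plan is to reduce to five representative three-translates via Remark~\ref{nonperm remark} and then settle the two implications by different means: explicit constructions for the non-extremal cases, and a block-partition ``descent'' argument (following Theorem~\ref{three-translate permutation}) for the extremal ones. The constant three-translates are excluded by the observation preceding the theorem, and by Remark~\ref{nonperm remark} every remaining three-translate induces a functigraph isomorphic to $C(C_{3k},\widetilde f)$ for exactly one of $\widetilde f\in\{(1,1,2),(1,2,1),(2,1,1),(1,3,1),(3,1,1)\}$. So it suffices to prove, for all $k\ge 3$: (a) $\gamma(C(C_{3k},\widetilde f))=2k$ when $\widetilde f\in\{(1,1,2),(1,2,1),(1,3,1)\}$; and (b) $\gamma(C(C_{3k},\widetilde f))\le 2k-1$ when $\widetilde f\in\{(2,1,1),(3,1,1)\}$.

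\emph{Direction (b).} Here $\gamma\le 2k$ is Proposition~\ref{dom_bound}, so I would exhibit a dominating set of size $2k-1$. For $k=3$ these are displayed in Figure~\ref{3T3K}. For general $k$ I would keep a bounded ``defect gadget'' on two or three consecutive triple-blocks $S_i=\{u_{3i-2},u_{3i-1},u_{3i},v_{3i-2},v_{3i-1},v_{3i}\}$, reproducing the $k=3$ configuration and meeting exactly one block in a single vertex, and place in every remaining block $S_i$ the two-element ``block filler'' $\{v_{3i-2},v_{3i-1}\}$ for $\widetilde f=(2,1,1)$, resp.\ $\{v_{3i-2},v_{3i}\}$ for $\widetilde f=(3,1,1)$. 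One checks that each such filler dominates its whole block, so only the two seams between the gadget and the filler region must be verified, and these form a finite, $k$-independent list. For instance, for $\widetilde f=(2,1,1)$ one may take $D=\{v_1,v_2\}\cup\{u_4,u_6\}\cup\{v_7\}\cup\bigcup_{i=4}^{k}\{v_{3i-2},v_{3i-1}\}$, of size $2k-1$.

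\emph{Direction (a).} The bound $\gamma\le 2k$ is again Proposition~\ref{dom_bound}; I must show $\gamma(C(C_{3k},f))\ge 2k$. Partition the vertex set into the $k$ triple-blocks $S_i$ above and suppose, for contradiction, that a minimum dominating set $D$ has $|D|<2k$. The first step is to observe that, for each of the three functions, the two ``central'' vertices $u_{3i-1}$ and $v_{3i-1}$ of $S_i$ have all of their neighbours inside $S_i$; hence any block with $|D\cap S_i|\le 1$ actually contains exactly one vertex of $D$, and it must lie in $N[u_{3i-1}]\cap N[v_{3i-1}]$. A short computation shows this intersection is $\{u_{3i},v_{3i-2}\}$ for $(1,1,2)$, $\{u_{3i-1},v_{3i-1}\}$ for $(1,2,1)$, and the single vertex $v_{3i}$ for $(1,3,1)$. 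The second step is the cascade: fixing such a block (say $S_1$) and each admissible position of its unique $D$-vertex, one checks that the vertices of $S_1$ left undominated can only be dominated from an adjacent block, and that this forces a neighbouring block to contain two $D$-vertices in specified positions, which themselves leave two vertices undominated, forcing the next block to carry a specified pair, and so on. Propagating this obligation around $C_{3k}$ --- restarting at any block that happens to contain $\ge 3$ vertices of $D$, exactly as in Theorem~\ref{three-translate permutation} --- shows that every block carries $\ge 2$ vertices of $D$ and that the return to $S_1$ manufactures a vertex of $D$ in $S_1$ incompatible with $|D\cap S_1|\le 1$; hence $|D|\ge 2k$, a contradiction.

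\emph{Main obstacle.} The delicate point is the cascade in Direction (a): I must verify, for each of the three cross-edge patterns separately, that \emph{every} admissible defect position really does trigger an unstoppable propagation of forced pairs --- in contrast to $(2,1,1)$ and $(3,1,1)$, where the ``$v_{3i-2}$-type'' defect forces only one vertex in each neighbour and the cascade dies, which is precisely why those two graphs admit a dominating set of size $2k-1$. The bookkeeping is heavier than in Theorem~\ref{three-translate permutation} because the three functions must be treated in parallel, the second forced vertex in a neighbouring block need not be uniquely determined, and the closed-neighbourhood intersections differ (one forced position for $(1,3,1)$, two for the others). Direction (b) is comparatively routine once the $k=3$ seeds of Figure~\ref{3T3K} are in hand; only the seam verification needs attention, and it does not depend on $k$.
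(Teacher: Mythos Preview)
Your proposal is correct and follows essentially the same route as the paper: the same reduction to five representatives via Remark~\ref{nonperm remark}, explicit dominating sets of size $2k-1$ (seeded by Figure~\ref{3T3K}) for $(2,1,1)$ and $(3,1,1)$, and the block-partition cascade of Theorem~\ref{three-translate permutation} for the three extremal cases. You are in fact slightly more explicit than the paper in two places --- you give a closed-form extension of the $(2,1,1)$ dominating set to all $k\ge 3$ (the paper just says ``one can readily see how to extend''), and you compute $N[u_{3i-1}]\cap N[v_{3i-1}]$ for each of the three extremal functions, whereas the paper carries out the cascade in detail only for $(3,1,3)\cong(1,3,1)$ and leaves the other two to the reader.
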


\begin{proof}
There are 21 functions which are not permutations from $S=\{1,2,
3\}$ to itself. The three constant functions obviously fail to
achieve the upper bound (if $\widetilde{f} \equiv$ constant, then $\gamma(C(C_{3k}, \widetilde{f}))=\gamma(C_{3k})=k$);
so there are 18 non-permutation functions to consider. By Remark~\ref{nonperm remark},
we need to consider five non-isomorphic classes.\\

First, we consider when the domination number of $C(C_{3k}, f)$
is less than $2\gamma(C_{3k})=2k$. If $C(C_{3k}, \widetilde{f})
\cong C(C_{3k},(2,1,1))$, then $D=\{4,6,1',2',7'\}$ is a
dominating set of $C(C_9, (2,1,1))$ (see (A) of Figure
\ref{3T3K}). If $C(C_{3k}, \widetilde{f}) \cong
C(C_{3k},(3,1,1))$, then $D\!=\!\{6,1',3',6',7'\}$ is a dominating set
of $C(C_9, (3,1,1))$ (see (B) of Figure \ref{3T3K}). In each
case, $|D|=5<2 \gamma(C_9)$, and one can readily see how to extend
a dominating set from $k$ to $k+1$ such that $\gamma(C(C_{3k},
\widetilde{f}))<2 \gamma(C_{3k})=2k$.\\

Second, we consider when $C(C_{3k}, \widetilde{f}) \cong
C(C_{3k},(1,1,2))$ or $C(C_{3k}, \widetilde{f}) \cong
C(C_{3k},(1,2,1))$ or $C(C_{3k}, \widetilde{f}) \cong
C(C_{3k},(1,3,1))$ (see Figure \ref{NPF3k}). In all three cases, $\gamma(C(C_{3k}, \widetilde{f})) = 2 \gamma(C_{3k})$ and our proofs for the three cases agree in the main idea but differ in details.\\

Here is the main idea. Since one can explicitly check the few cases when $k<3$, assume $k\geq 3$. In all three cases, we view $C(C_{3k}, \widetilde{f})$ as the union of $k$ subgraphs $\langle U_i \rangle$ for $1\leq i\leq k$, where $U_i=\{u_{3i-2}, u_{3i-1}, u_{3i}, v_{3i-2}, v_{3i-1}, v_{3i}\}$, together with two additional edges between $U_i$ and $U_j$ exactly when $i-j\equiv -1\mbox{\,or\,} 1\!\!\pmod {k}$. For each $i$, the presence of internal vertices in $U_i$ (vertices which can not be dominated from outside of $U_i$) imply the inequality $|D\cap U_i| \geq 1$. Assuming, for the sake of contradiction, that there exists a minimum dominating set $D$ with $|D|<2k$, we conclude, by the pigeonhole principle, the existence of a ``deficient $U_p$" (i.e., $|D\cap U_p|=1<2$). Starting at this $U_p$ and sequentially going through each $U_i$, we can argue that this deficient $U_p$ is necessarily compensated (or ``paired off") by an ``excessive $U_q$" (i.e., $|D\cap U_q|>2$). Going through all indices in $\{1,2, \ldots, k\}$, we are forced to conclude that $|D|\geq 2k$, contradicting our hypothesis. To avoid undue repetitiveness, we provide a detailed proof only in one of the three cases, the case of $C(C_{3k},(1,3,1))$, which is isomorphic to $C(C_{3k},(3,1,3))$.\\

Claim: If $C(C_{3k}, \widetilde{f}) \cong C(C_{3k},(3,1,3))$, then $\gamma(C(C_{3k}, f))=2k=2 \gamma(C_{3k})$.\\

\emph{Proof of Claim.} The assertion may be explicitly verified for $k<4$; so let $k\geq 4$.
For the sake of contradiction, assume $\gamma(C(C_{3k},f)) < 2k$
and consider a minimum dominating set $D$ for $C(C_{3k},f)$. We
can partition the vertices into $k$ sets $U_i=\{u_{3i-2}, u_{3i-1}, u_{3i}, v_{3i-2}, v_{3i-1}, v_{3i}\}$ for $1 \leq i \leq
k$.  By the Pigeonhole Principle, $|D \cap U_i| \leq 1$ for some
$i$. Without loss of generality, we assume that $|D \cap U_1| \leq 1$. Since neither
$u_2$ nor $v_2$ has a neighbor that is not in $U_1$, $D \cap U_1$
must be $\{v_1\}$ -- the only vertex to dominate both $u_2$ and $v_2$.\\

Notice that $u_3$ and $v_3$ are not dominated by $v_1$, the only
vertex in $D \cap U_1$, so $D \cap U_2$ must contain both $u_4$
and $v_4$. But then either $|D \cap U_2| \geq 3$ or $u_6$ is not
dominated by any vertex in $D \cap U_2$: if $|D \cap U_2| \geq 3$,
we start the argument anew at $U_3$; thus we may, without loss of generality, assume
$u_6$ is not dominated by any vertex in $D \cap U_2$. This forces
$u_7$, which dominates $u_6$, $u_8$, and $v_9$, to be in $D$. Now,
for $v_7$ and $v_8$ to be dominated, one of them must be in $D$.
But this still leaves $u_9$ un-dominated by any vertex in
$\cup_{i=1}^{3}U_i$. Again, if $|D \cap U_3| \geq 3$, we start the
argument anew at $U_4$. Thus, we may, without loss of generality, assume $u_9$ is not
dominated by any vertex in
$\cup_{i=1}^{3}U_i$.\\

This pattern (allowing restarts) is forced to persist if
\mbox{$\gamma(C(C_{3k},f)) < 2k$}. Now, one of two situations
prevails for $U_k$: First, the argument begins anew at $U_k$. In
this case, even if $u_{3k-2}$ and $v_{3k-2}$ are dominated by
vertices outside of $U_k$, one still has $|D \cap U_k| \geq 2$,
and hence $|D| \geq 2k$. Second, the vertices $u_{3k-2}$ and
either $v_{3k-2}$ or $v_{3k-1}$ are already in $D$. And if $|D\cap
U_k|=2$, then $u_{3k}$ (and, for that matter, $u_1$) is left
un-dominated. Therefore, $|D\cap U_k|\geq 3$ and $|D|\geq 2k$,
contradicting the original hypothesis. ~\hfill
\end{proof}

\begin{figure}[htbp]
\begin{center}
\scalebox{0.5}{\input{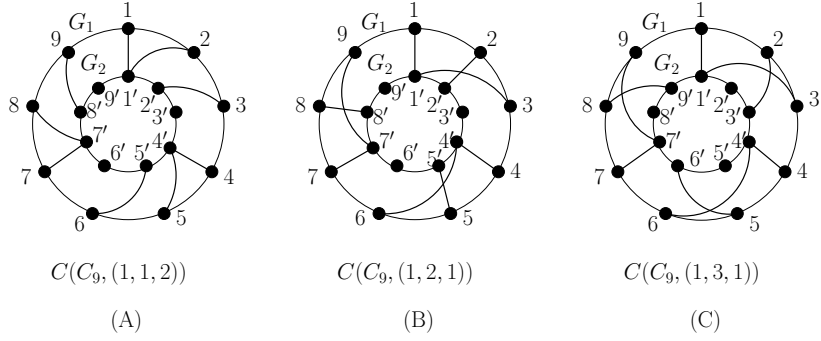}}\caption{Examples of
$C(C_{3k},f)$ such that $\gamma(C(C_{3k}, f))=2\gamma(C_{3k})$ for
non-permutation three-translates $f$ and for $k \ge 3$}
\label{NPF3k}
\end{center}
\end{figure}

\newpage
Now, we consider sufficient conditions for $\gamma(C(C_{3k}, f)) <
2 \gamma(C_{3k})$ in terms of the maximum and the average degree
of $C(C_{3k}, f)$, respectively.

\begin{prop}\label{max degree}
If $\Delta(C(C_{3k},f)) \ge k+5$, then $\gamma(C(C_{3k},f)) < 2
\gamma(C_{3k})$.
\end{prop}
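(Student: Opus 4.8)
The plan is to exhibit, under the hypothesis, a dominating set of $C(C_{3k},f)$ of size at most $2k-1=2\gamma(C_{3k})-1$.

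\emph{Translation and set-up.} Every vertex of $G_1$ has degree $3$ in $C(C_{3k},f)$, while a vertex $v\in V(G_2)$ has degree $2+|f^{-1}(v)|$; hence $\Delta(C(C_{3k},f))\ge k+5$ forces a vertex of $G_2$, which after relabelling we call $v_1$, with $p:=|f^{-1}(v_1)|\ge k+3$. In particular at most $3k-p\le 2k-3$ vertices of $G_1$ are not pre-images of $v_1$, and $|\mathrm{Range}(f)|\le (3k-p)+1\le 2k-2$. I would put $v_1$ into the dominating set and take $D_2=\{v_i:i\equiv 1\pmod 3\}\subseteq V(G_2)$, a minimum dominating set of $G_2\cong C_{3k}$ containing $v_1$; then $|D_2|=k$, $D_2$ dominates all of $V(G_2)$, and $D_2$ dominates exactly $f^{-1}(D_2)\supseteq f^{-1}(v_1)$ in $G_1$. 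Writing $W=V(G_1)\setminus f^{-1}(D_2)$, we have $|W|\le 2k-3$, and it remains to dominate $W$ with at most $k-1$ further vertices; then $D=D_1\cup D_2$ satisfies $|D|\le 2k-1$.

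\emph{The main tool.} If the complement $\overline W=f^{-1}(D_2)$ contains three consecutive vertices $u_{j-1},u_j,u_{j+1}$ of the cycle $G_1$, take $Z=\{u_i:i\equiv j\pmod 3\}$, a minimum dominating set of $G_1$. Since $u_j$ is the only vertex of $Z$ that dominates any of $u_{j-1},u_j,u_{j+1}$, the set $Z\setminus\{u_j\}$, of size $k-1$, still dominates $V(G_1)\setminus\{u_{j-1},u_j,u_{j+1}\}\supseteq W$. This settles every case in which $f^{-1}(D_2)$ contains three consecutive vertices of $G_1$ — in particular whenever $f^{-1}(v_1)$ does.

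\emph{The remaining (fragmented) case, and the main obstacle.} If every run of $f^{-1}(D_2)$ has length at most $2$, then $f^{-1}(v_1)$ alone is split into at least $\lceil(k+3)/2\rceil$ runs, so $W$ is broken into many short pieces and domination of $W$ from inside $G_1$ may genuinely cost $k$ rather than $k-1$. Here one must use the bipartite edges $u\mapsto f(u)$: because $|\mathrm{Range}(f)|\le 2k-2$, the images of the vertices of $W$ occupy few vertices of $G_2$, so many of the length-$1$ runs of $W$ can be dominated "for free" by vertices that have to be included in $D_2$ anyway to dominate $G_2$; alternatively, enlarging $D_2$ by one vertex of the form $f(u)$ manufactures a run of length $3$ in $f^{-1}(D_2)$, after which one argues that the bound $|f^{-1}(D_2)|\ge k+3$ permits stealing a \emph{second} vertex from $Z$. (For the finitely many small $k$ one checks directly.) Making one of these savings rigorous — in effect, pinning down how "$f^{-1}(v_1)$ large" and "$\mathrm{Range}(f)$ small" jointly force the extra vertex to be savable when $W$ is highly fragmented — is the step I expect to be the crux; the translation, the choice of $D_2$, and the steal argument above are all routine.
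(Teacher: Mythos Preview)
Your set-up and ``main tool'' are fine, and they correspond to the paper's first case. The gap is exactly where you say it is: the fragmented case, where $f^{-1}(D_2)$ contains no three consecutive vertices of $G_1$, is left unresolved. Your suggestions there (exploit smallness of $\mathrm{Range}(f)$, or enlarge $D_2$ by one vertex and try to steal a second vertex from $Z$) are not carried out, and it is not clear that either can be made to work without additional ideas; in particular, adding one vertex to $D_2$ costs one, and there is no guarantee it creates a \emph{second} disjoint triple that lets you remove a second vertex from $Z$.

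The paper closes this gap by a different bookkeeping that avoids the ``fragmented'' difficulty entirely. Instead of looking for an arbitrary run of three in $f^{-1}(D_2)$, it partitions $V(G_1)$ into the $k$ aligned triples $U_i=\{u_{3i-2},u_{3i-1},u_{3i}\}$. If some $U_i\subseteq N[v_1]$ you are in your easy case. Otherwise $|N[v_1]\cap U_i|\le 2$ for every $i$, and since $|N[v_1]\cap V(G_1)|\ge k+3$ there are at least three indices $p,q,r$ with $|N[v_1]\cap U_i|=2$; let $x\in U_p$, $y\in U_q$, $z\in U_r$ be the single ``exception'' vertices not in $N[v_1]$. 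Now apply pigeonhole to the residues of $f(x),f(y),f(z)$ modulo $3$: either one of them already lies in $V_1=\{v_{3j+1}\}$ (so the whole block $U_p$ is covered by $V_1$ and you can drop $u_{3p-1}$ from your $Z$), or two of them, say $f(x),f(y)$, lie in the same $V_s$ with $s\ne 1$; then take $D_2=\{v_1\}\cup V_s$ (still of size $k+1$, still dominating $G_2$) and drop both $u_{3p-1}$ and $u_{3q-1}$ from $Z$. Either way you get a dominating set of size $2k-1$. The missing ingredient in your outline is precisely this pigeonhole on the images $f(x),f(y),f(z)$ of the exception vertices; once you have it, the fragmented case dissolves.
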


\begin{proof}
Suppose $C(C_{3k},f)$ is a functigraph with maximum degree at
least $k+5$. Without loss of generality, we assume that the degree of $v_1$ is at least
$k+5$. Partition the vertices of $G_1$ into $k$ sets $U_i =
\{u_{3i-2},u_{3i-1},u_{3i}\}$, where $1 \le i \le k$. If $N[v_1]$
contains any set $U_i$, say $U_1 \subseteq N[v_1]$, then $\{u_i \
| \ i\geq 5 \mbox{ and }i \equiv 2 \pmod 3\} \cup \{v_i \ | \ i
\equiv 1 \pmod 3 \}$ is a dominating set of $C(C_{3k},f)$ with
$2k-1$ vertices. Thus, we may assume that $|N[v_1] \cap U_i|\leq
2$ for each $i$. It follows that $|N[v_1]\cap U_i|=2$ for at least
$3$ different values of $i$, say $i = p, q,$ and $r$. Let $x$,
$y$, and $z$ be the vertices in $G_1$ that are in $U_p$, $U_q$,
$U_r$ (respectively) and not in $N[v_1]$.\\

Suppose one of $x$, $y$, and $z$, say $x$, maps to a vertex
$v_{3j+1}$ for some $j$. Then $\{u_{\ell} \ | \ \ell \equiv 2
\pmod 3 \mbox{ and } \ell \neq 3p-1\} \cup \{v_{\ell} \ | \ \ell
\equiv 1 \pmod 3\}$ is a dominating set of $C(C_{3k},f)$ with
$2k-1$ vertices. Otherwise, two of $x$, $y$, and $z$, say $x$ and
$y$, map to vertices $v_{s}$ and $v_{t}$ such that $s \equiv t$
(mod 3), say $s \equiv t \equiv 0 \pmod 3$, without loss of generality. But then the
set $\{u_{\ell} \ | \ \ell \equiv 2 \pmod 3, \ell \neq 3p-1,
\mbox{ and } \ell \neq 3q-1\} \cup \{v_1\} \cup \{v_{\ell} \ | \
\ell \equiv 0 \pmod 3\}$ is a dominating set of $C(C_{3k},f)$
with $2k-1$ vertices. \hfill
\end{proof}

The following example shows that the bound provided in Proposition
\ref{max degree} is nearly sharp. Namely, there exists a function
$f:V(C_{3k})\rightarrow V(C_{3k})$ such that the resulting
functigraph has $\Delta(C(C_{3k},f))=k+3$ and
$\gamma(C(C_{3k},f))=2 \gamma(C_{3k})=2k$.\\

\begin{exm}\label{ex2}
For $k\in \mathbb{Z}^+$, let $f: V(C_{3k}) \rightarrow V(C_{3k})$ be a function defined by 
\begin{displaymath}
f(u_i)= \left\{
\begin{array}{ll}
v_{i} & \mbox{ if } i  \equiv 1 \pmod 3,\\
v_{i+1} & \mbox{ if } i \equiv 2 \pmod 3,\\
v_{3k} & \mbox{ if } i  \equiv 0 \pmod 3.
\end{array} 
\right.
\end{displaymath}
Then $\gamma(C(C_{3k},f))=2k=2\gamma(C_{3k})$.
\end{exm}

\begin{proof} Notice that $\Delta(C(C_{3k},f))=\deg(v_{3k})=k+3$. For $1\leq i\leq k$, define
$S_i=\{u_{3i},u_{3i-1},u_{3i-2},v_{3i},v_{3i-1},v_{3i-2}\}$, and notice that $\cup_{i=1}^{k}S_i$ is a partition of
$V(C(C_{3k},f))$. Let $D$ be any dominating set of $C(C_{3k},f)$;
we need to show that $|D|\geq 2k$. Observe that $|D \cap S_i|\geq 1$
since neither $u_{3i-1}$ nor $v_{3i-1}$ can be dominated from
outside of $S_i$ for $1\leq i\leq k$. We will argue in an inductive
fashion starting at $k$ and descending to $1$.\\

Suppose $|D|<2k$; choose the biggest $j\leq k$ such that $|D \cap
S_j|=1$. Of necessity $v_{3j}\in D$, as it is the only vertex in
$S_j$ dominating both $u_{3j-1}$ and $v_{3j-1}$. Then $|D \cap
S_{j-1}|\geq 2$, since to dominate $u_{3j-2}$ and $v_{3j-2}$ in
$S_j$, $D$ must contain both $u_{3j-3}$ and $v_{3j-3}$ in
$S_{j-1}$.\\

Now, if $|D \cap S_{j-1}|\geq 3$, then it is ``paired off" with
$S_j$. We will choose the biggest $\ell<j$ such that $|D \cap
S_{\ell}|=1$ and restart at $S_{\ell}$ our inductive argument. Of
course, $S_j$ may be paired off with $S_q$ where $j>q\geq 1$ and
$|D \cap S_q|\geq 3$; in this case, of necessity, $|D \cap S_p|=2$
for $j>p>q$, and we restart the argument after $S_q$ when $q>1$.
Therefore, one of the following cases must hold for $S_1$.

\vspace{.1in}

(i) $|D \cap S_1|\geq 3$: then $S_1$ may be paired off
with the least $j$ such that $|D \cap S_j|=1$, if necessary.

\vspace{.05in}

(ii) $|D \cap S_1|=2$ and every $S_j$ with $|D \cap S_j|=1$ is
paired off with $S_{q}$ such that $q<j$ and $|D \cap S_{q}|\geq 3$.

\vspace{.05in}

(iii) $|D \cap S_1|=2$ and there exists $j>1$ with $|D \cap
S_j|=1$ which is not paired off with some $S_{q}$ such that $q<j$
and $|D \cap S_{q}|\geq 3$: If $j=k$, then by examining $S_k$,
$S_{k-1}$, and $S_1$, we will readily see that the assumption is
impossible ($u_{1}$ is not dominated). If $j<k$, then there must
exist $q>j$ such that
$|D \cap S_{q}|\geq 3$ (in order to dominate $u_{3(j+1)-2}$). 

\vspace{.05in}

(iv) $|D \cap S_{1}|=1$: then there must exist $q>1$ such that $|D
\cap S_{q}|\geq 3$ (in order to dominate~$u_4$).

\vspace{.1in}

In each case, we conclude $|D|\geq 2k$, contradicting our original
supposition.\hfill
\end{proof}

\begin{prop}\label{average degree}
Suppose $C(C_{3k},f)$ is a functigraph with domain $G_1$ and
codomain $G_2$. Partition $G_2$ into three sets $V_1$, $V_2$, and
$V_3$ such that $V_i = \{v_j \ | \ j \equiv i \pmod 3\}$. If there
is some $i$ such that the average degree over all vertices in
$V_i$ is strictly greater than $4$, then $\gamma(C(C_{3k},f)) < 2
\gamma(C_{3k})$.
\end{prop}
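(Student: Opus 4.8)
The plan is to convert the average-degree hypothesis into a lower bound on the size of a pre-image and then exhibit an explicit dominating set of size $2k-1$. First I would record the degree identity: for each $v_j\in V(G_2)$, the vertex $v_j$ is adjacent in $C(C_{3k},f)$ to its two cycle-neighbors in $G_2$ together with one functigraph edge for each element of $f^{-1}(v_j)$, so $\deg(v_j)=2+|f^{-1}(v_j)|$. Since $|V_i|=k$, summing over $V_i$ gives $\sum_{v_j\in V_i}\deg(v_j)=2k+|f^{-1}(V_i)|$; hence the average degree over $V_i$ exceeds $4$ exactly when $|f^{-1}(V_i)|>2k$, i.e. (as this is an integer) when $|f^{-1}(V_i)|\geq 2k+1$.

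Next I would fix an $i$ with $|f^{-1}(V_i)|\geq 2k+1$ and set $D_2=V_i$. As $V_i$ picks out every third vertex of the cycle $G_2$, it is a dominating set of $G_2$ with $|D_2|=k=\gamma(C_{3k})$; moreover each vertex of $f^{-1}(D_2)\subseteq V(G_1)$ is joined by a functigraph edge to its image in $D_2$, so $D_2$ also dominates all of $f^{-1}(D_2)$. I would then take $D_1=V(G_1)\setminus f^{-1}(D_2)$, which dominates itself trivially. Every vertex of $C(C_{3k},f)$ then lies in $G_2$ (dominated by $D_2$), or in $f^{-1}(D_2)$ (dominated by $D_2$), or in $D_1$; so $D_1\cup D_2$ is a dominating set, and since $D_1\subseteq V(G_1)$ and $D_2\subseteq V(G_2)$ are disjoint, $|D_1\cup D_2|=|D_1|+|D_2|$.

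Finally, $|D_1|=3k-|f^{-1}(D_2)|\leq 3k-(2k+1)=k-1$, so $|D_1\cup D_2|\leq (k-1)+k=2k-1<2k=2\gamma(C_{3k})$, which is the claim. I do not expect a genuine obstacle here: the whole argument is essentially a one-line counting step once the identity $\deg(v_j)=2+|f^{-1}(v_j)|$ is in hand, and the only subtlety is the same one underlying Theorem \ref{3k+2 non permutation} and Proposition \ref{max degree}, namely that a pre-image exceeding $2k$ leaves at most $k-1$ vertices of $G_1$ to be cleaned up by $D_1$, which is precisely enough to beat the trivial bound $2\gamma(C_{3k})$ by one.
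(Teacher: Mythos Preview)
Your proof is correct and is essentially identical to the paper's argument: the paper observes that the degree hypothesis forces $|N[V_i]\cap V(G_1)|\geq 2k+1$ (which is exactly your $|f^{-1}(V_i)|\geq 2k+1$), takes $U_1=V(G_1)\setminus N[V_i]$ as the remaining at most $k-1$ vertices, and exhibits $U_1\cup V_i$ as a dominating set of size at most $2k-1$. Your version simply makes the degree identity $\deg(v_j)=2+|f^{-1}(v_j)|$ and the arithmetic explicit.
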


\begin{proof}
Suppose $C(C_{3k},f)$ is a functigraph with codomain $G_2$ and
that there is some $i$, say $i=1$, such that the average degree
over all vertices in $V_1$ is strictly greater than $4$. Then
$|N[V_1] \cap V(G_1)| \geq 2k+1$. Let $U_1$ be the vertices in
$V(G_1)$ that are not in $N[V_1]$ and notice that $|U_1| \leq
k-1$. Then $U_1 \cup V_1$ is a dominating set of $C(C_{3k},f)$.
\hfill
\end{proof}

\begin{rem}
The result obtained in
Proposition~\ref{average degree} is sharp as shown in Example \ref{ex2}.
In the example, the average degree of the vertices in $V_3$ is exactly 4.
\end{rem}

\textit{Acknowledgement.} The authors wish to thank Andrew Chen
for a motivating example -- the graph (B) in Figure \ref{NPF3k}. The authors also thank the referees and the editor for corrections and suggestions, which improved the paper.


\end{document}